\newtheorem{theorem}{Theorem}[section]
\newtheorem{proposition}[theorem]{Proposition}
\newtheorem{lemma}[theorem]{Lemma}
\newtheorem{corollary}[theorem]{Corollary}
\theoremstyle{definition}
\newtheorem{example}[theorem]{Example}
\newtheorem{problem}[theorem]{Problem}
\theoremstyle{remark}
\newtheorem{remark}[theorem]{Remark}
\numberwithin{equation}{section}
\begin{document}

%% The title of the paper goes here.  Edit your title.
 
\title[$2$-irreducible and strongly $2$-irreducible ideals]{$2$-irreducible and strongly $2$-irreducible ideals of commutative rings} 
%% Now edit the following to give First Author name and address:
%% $^*$ for the corresponding author.
 
\author[Mostafanasab]{H. Mostafanasab}
\address[Hojjat Mostafanasab]{Department of Mathematics and Applications, University of Mohaghegh Ardabili, P. O. Box 179, Ardabil, Iran}
\email{h.mostafanasab@uma.ac.ir, h.mostafanasab@gmail.com}

\author[Yousefian Darani]{A. Yousefian Darani$^*$}
\address[Ahmad Yousefian Darani]{Department of Mathematics and Applications, University of Mohaghegh Ardabili, P. O. Box 179, Ardabil, Iran}
\email{yousefian@uma.ac.ir, youseffian@gmail.com}
%% If there are three of more authors they are added in the obvious way. 

  \thanks{$^*$Corresponding author}
%------------------------------------------------------------------------------------%
%%
%% Use the following command to make the title for the paper.
%
\maketitle
%
%%% The following environment is needed for the abstract.  
%%%

\begin{abstract}
An ideal $I$ of a commutative ring $R$ is said to be {\it irreducible} if it cannot be 
written as the intersection of two larger ideals. A proper ideal $I$ of a ring 
$R$ is said to be {\it strongly irreducible} if for each ideals $J,~K$ of $R$, 
$J\cap K\subseteq I$ implies that $J\subseteq I$ or 
$K\subseteq I$. In this paper, we introduce the
concepts of 2-irreducible and strongly 2-irreducible ideals which are generalizations of  
irreducible and strongly irreducible ideals, respectively.
We say that a proper ideal $I$ of a ring $R$ is {\it 2-irreducible} if for each ideals 
$J,~K$ and $L$ of $R$, $I=J\cap K\cap L$ implies that either $I=J\cap K$ or 
$I=J\cap L$ or $I=K\cap L$. A proper ideal $I$ of a ring $R$ is called {\it strongly 2-irreducible}
if for each ideals $J,~K$ and $L$ of $R$, $J\cap K\cap L\subseteq I$ implies that either $J\cap K\subseteq I$ or 
$J\cap L\subseteq I$ or $K\cap L\subseteq I$.\\

\textbf{Keywords:}  Irreducible ideals, 2-irreducible ideals, strongly 2-irreducible ideals.  \\
\textbf{MSC(2010):}  Primary: 13A15, 13C05; Secondary: 13F05, 13G05.
\end{abstract}
 
\section{\bf Introduction}

Throughout this paper all rings are commutative with a nonzero identity.
Recall that an ideal $I$ of a commutative ring $R$ is {\it irreducible} if $I=J\cap K$ for ideals $J$ and $K$ of $R$
implies that either $I=J$ or $I=K$. 
A proper ideal $I$ of a ring $R$ is said to be {\it strongly irreducible}
if for each ideals $J,~K$ of $R$, $J\cap K\subseteq I$ implies that $J\subseteq I$ or 
$K\subseteq I$ (see \cite{Aziz}, \cite{hei}). Obviously a proper ideal $I$ of a ring $R$ is
strongly irreducible if and only if for each $x,y\in R$, $Rx\cap Ry\subseteq I$ implies that $x\in I$ or
$y\in I$. It is easy to see that any strongly irreducible ideal is an irreducible ideal.
Now, we recall some definitions which are the motivation of our work. Badawi in \cite{B} generalized the concept of prime ideals in a different
way. He defined a nonzero proper ideal $I$ of $R$ to be a {\it 2-absorbing
ideal} of $R$ if whenever $a, b, c\in R$ and $abc\in I$, then $ab\in I$ or $%
ac\in I$ or $bc\in I$.  It is shown that a proper
ideal $I$ of $R$ is a 2-absorbing ideal if and only if whenever $I_1I_2I_3\subseteq I$ for some ideals
$I_1,I_2,I_3$ of $R$, then $I_1I_2\subseteq I$ or $I_1I_3\subseteq I$ or $I_2I_3\subseteq I$. 
In \cite{YFP}, Yousefian Darani and Puczy{\l}owski studied the concept of 2-absorbing commutative semigroups.
Anderson and Badawi 
\cite{AB1} generalized the concept of $2$-absorbing ideals to $n$-absorbing
ideals. According to their definition, a proper ideal $I$ of $R$ is called
an $n$-{\it absorbing} (resp. {\it strongly $n$-absorbing}) ideal if whenever $%
a_1\cdots a_{n+1}\in I$ for $a_1,...,a_{n+1}\in R$ (resp. $I_1\cdots
I_{n+1}\subseteq I$ for ideals $I_1, \cdots I_{n+1}$ of $R$), then there are 
$n$ of the $a_i$'s (resp. $n$ of the $I_i$'s) whose product is in $I$. Thus a strongly 1-absorbing ideal is just a prime ideal. Clearly a strongly $n$-absorbing ideal of $R$ is also an $n$-absorbing ideal of $R$. The concept of 2-absorbing primary ideals, 
a generalization of primary ideals was introduced and investigated in \cite{Bt}.
A proper ideal $I$ of a commutative ring $R$ is called a {\it 2-absorbing primary ideal} if whenever
$a,b,c\in R$ and $abc\in I$, then either $ab\in I$ or $ac\in\sqrt{I}$ or $bc\in\sqrt{I}$.
We refer the readers to \cite{YB} for a specific kind of 2-absorbing ideals and to \cite{M}, \cite{YF}, \cite{YF2} for the module version of the above definitions.
We define an ideal $I$ of a ring $R$ to be {\it 2-irreducible} if whenever $I=J\cap K\cap L$ for ideals $I,~J$ and $K$ of $R$, then either $I=J\cap K$ or $I=J\cap L$ or $I=K\cap L$. Obviously,
any irreducible ideal is a 2-irreducible ideal. Also, we say that a proper ideal $I$ of a ring $R$ is called {\it strongly 2-irreducible} 
if for each ideals $J,~K$ and $L$ of $R$, $J\cap K\cap L\subseteq I$ implies that 
$J\cap K\subseteq I$ or $J\cap L\subseteq I$ or $K\cap L\subseteq I$. Clearly, any strongly irreducible ideal is a strongly 2-irreducible ideal. 
In \cite{YFM2}, \cite{YFM1} we can find the notion of 2-irreducible preradicals and its dual, the notion of co-2-irreducible preradicals.
We call a proper ideal $I$ of a ring $R$ {\it singly strongly 2-irreducible} if for each $x,y,z\in R$, $Rx\cap Ry\cap Rz\subseteq I$ implies that $Rx\cap Ry\subseteq I$ or
$Rx\cap Rz\subseteq I$ or $Ry\cap Rz\subseteq I$. It is trivial that any strongly 2-irreducible ideal is a singly strongly 2-irreducible ideal.
A ring $R$ is said to be an {\it arithmetical ring}, if for each ideals $I,~J$ and $K$ of $R$, $(I+J)\cap K=(I\cap K)+(J\cap K)$. This condition is equivalent to the condition that for each ideals $I,~J$ and $K$ of $R$, $(I\cap J)+K=(I+K)\cap(J+K)$, see \cite{jen}.
In this paper we prove that, a nonzero ideal $I$ of a principal ideal domain $R$ is 2-irreducible if and only if
$I$ is strongly 2-irreducible if and only if $I$ is 2-absorbing primary. It is shown that a proper ideal $I$
of a ring $R$ is strongly 2-irreducible if and only if for each $x,y,z\in R$, $(Rx+Ry)\cap(Rx+Rz)\cap(Ry+Rz)\subseteq I$
implies that $(Rx+Ry)\cap(Rx+Rz)\subseteq I$ or $(Rx+Ry)\cap(Ry+Rz)\subseteq I$
or $(Rx+Rz)\cap(Ry+Rz)\subseteq I$. A proper ideal $I$ of a von Neumann regular ring $R$ is 2-irreducible if and only if $I$ is 2-absorbing
if and only if for every idempotent elements $e_1,e_2,e_3$ of $R$, 
$e_1e_2e_3\in I$ implies that either $e_1 e_2\in I$
or $e_1 e_3\in I$ or $e_2 e_3\in I$. If $I$ is a 2-irreducible ideal of a Noetherian ring $R$, 
then $I$ is a 2-absorbing primary ideal of $R$. Let $R=R_1\times R_2$, where $R_1$ and $R_2$ are commutative rings with
$1\neq0$. It is shown that  a proper ideal $J$ of $R$ is a strongly 2-irreducible ideal of $R$ if and only if
either $J=I_1\times R_2$ for some strongly 2-irreducible ideal $I_1$ of $R_1$ or $J=R_1\times I_2$
for some strongly 2-irreducible ideal $I_2$ of $R_2$ or $J=I_1\times I_2$ for
some strongly irreducible ideal $I_1$ of $R_1$ and some strongly irreducible ideal $I_2$ of $R_2$.
A proper ideal $I$ of a unique factorization domain $R$ is singly strongly 2-irreducible if and only if $p_1^{n_1}p_2^{n_2}\cdots p_k^{n_k}\in I$, where $p_i$'s are distinct prime elements of $R$ and $n_i$'s are natural numbers, implies that $p_r^{n_r}p_s^{n_s}\in I$,
for some $1\leq r,s\leq k$.
\section{\bf Basic properties of 2-irreducible and strongly 2-irreducible ideals}

It is important to notice that when $R$ is a domain, then $R$ is an arithmetical ring if and only if $R$ is a Pr\"{u}fer domain. In particular, every Dedekind domain is an arithmetical domain.
\begin{theorem}\label{basic5}
Let $R$ be a Dedekind domain and $I$ be a nonzero proper ideal of $R$. The following conditions are equivalent:
\begin{enumerate}
\item $I$ is a strongly irreducible ideal;
\item $I$ is an irreducible ideal;
\item $I$ is a primary ideal;
\item $I=Rp^n$ for some prime (irreducible) element $p$ of $R$ and some natural number $n$.
\end{enumerate}
\end{theorem}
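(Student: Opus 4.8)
The plan is to prove the cyclic chain of implications $(1)\Rightarrow(2)\Rightarrow(3)\Rightarrow(4)\Rightarrow(1)$. The first implication $(1)\Rightarrow(2)$ needs no work: as already observed in the Introduction, every strongly irreducible ideal is irreducible. For $(2)\Rightarrow(3)$ I would use that a Dedekind domain is Noetherian together with the classical Lasker--Noether fact that in a Noetherian ring every irreducible ideal is primary; the short argument (pass to $R/I$, assume $I=0$ is irreducible but not primary, pick $ab=0$ with $a\neq0$ and $b$ not nilpotent, stabilize the chain $\mathrm{Ann}(b)\subseteq\mathrm{Ann}(b^2)\subseteq\cdots$ at $\mathrm{Ann}(b^m)$, and check that $(a)\cap(b^m)=0$ with both summands nonzero) can be reproduced in full if a self-contained proof is wanted.

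For $(3)\Rightarrow(4)$ I would exploit unique factorization of ideals in the Dedekind domain $R$. If $I$ is primary then $\sqrt{I}$ is prime, and since $I\neq 0$ and every nonzero prime of a Dedekind domain is maximal, $\sqrt{I}=P$ for a maximal ideal $P$. Writing $I=P_1^{a_1}\cdots P_k^{a_k}$ with the $P_i$ distinct nonzero primes, the comaximality of distinct maximal ideals gives $\sqrt{I}=P_1\cap\cdots\cap P_k=P_1\cdots P_k$, and a finite intersection of distinct maximal ideals is prime only when $k=1$. Hence $I=P^n$, and writing $P=Rp$ for a prime (irreducible) element $p$ we obtain $I=Rp^n$.

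For $(4)\Rightarrow(1)$, let $I=Rp^n$ and suppose $J\cap K\subseteq I$ for ideals $J,K$ of $R$; the cases $J=0$ or $K=0$ are trivial, so assume both are nonzero and consider their prime factorizations (with $R$ corresponding to the empty product). In a Dedekind domain ``to contain is to divide'', and $J\cap K$ is the least common multiple of $J$ and $K$ in the monoid of nonzero ideals; thus $J\cap K\subseteq Rp^n=P^n$ forces the exponent of $P=Rp$ in $\mathrm{lcm}(J,K)$ — namely the maximum of the $P$-exponents of $J$ and $K$ — to be at least $n$. Therefore $P^n\mid J$ or $P^n\mid K$, i.e. $J\subseteq I$ or $K\subseteq I$, and $I$ is proper because $n\geq1$; so $I$ is strongly irreducible. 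Equivalently, one may localize: $I_M$ is a power of the maximal ideal of the discrete valuation ring $R_M$ for each maximal ideal $M$, the ideals of a DVR form a chain and hence each is strongly irreducible, and strong irreducibility transfers back to $R$ from all the $R_M$ via $I=\bigcap_M I_M$ together with the fact that containment of ideals may be checked locally.

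I expect the main obstacle to be bookkeeping rather than ideas: the whole argument rests on unique factorization of ideals (equivalently, on $R$ being an arithmetical domain whose localizations at maximal ideals are DVRs), so the care needed is concentrated in $(3)\Rightarrow(4)$, to rule out more than one prime factor, and in $(4)\Rightarrow(1)$, to handle ideals that are not prime powers and to justify the local--global passage if the localization route is chosen.
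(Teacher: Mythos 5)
The paper itself offers no argument here---it simply cites Heinzer--Ratliff--Rush for the equivalence of (1)--(3) and an exercise in Lorenzini for (4)---so your self-contained cycle $(1)\Rightarrow(2)\Rightarrow(3)\Rightarrow(4)\Rightarrow(1)$ is a genuine reconstruction rather than a paraphrase. The implications $(1)\Rightarrow(2)$ and $(2)\Rightarrow(3)$ (Noether's lemma that irreducible ideals of a Noetherian ring are primary, with the annihilator-chain argument you sketch) are correct, as is your reduction of a nonzero primary ideal to a power $P^{n}$ of a single maximal ideal via unique factorization, and the ``to contain is to divide'' computation showing $P^{n}$ is strongly irreducible: $v_{P}(J\cap K)=\max\{v_{P}(J),v_{P}(K)\}\ge n$ forces $v_{P}(J)\ge n$ or $v_{P}(K)\ge n$.

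The one genuine gap is the sentence ``writing $P=Rp$ for a prime (irreducible) element $p$'' in $(3)\Rightarrow(4)$. A maximal ideal of a Dedekind domain need not be principal: in $\mathbb{Z}[\sqrt{-5}]$ the ideal $(2,\,1+\sqrt{-5})$ is maximal, hence primary, but is not of the form $Rp^{n}$ for any element $p$. So condition (4) as literally stated is strictly stronger than (1)--(3) unless $R$ has trivial class group; this defect is inherited from the statement of the theorem itself, whose correct Dedekind-domain form of (4) is ``$I=P^{n}$ for some maximal ideal $P$,'' the prime-element version being valid only for PIDs (which is in fact the only setting in which the paper later uses item (4), in Theorem~\ref{basic8} and Corollary~\ref{basic9}). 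You should either prove the corrected form of (4) or add the PID hypothesis; everything else goes through. A minor caution on your alternative localization route for $(4)\Rightarrow(1)$: strong irreducibility of every localization $I_{M}$ does not by itself globalize (consider $I=P\cap Q$ for distinct maximal ideals $P,Q$, which is locally strongly irreducible everywhere but not strongly irreducible); the argument works here only because $I=P^{n}$ satisfies $I_{M}=R_{M}$ for every maximal ideal $M\neq P$, so the nontrivial local alternative occurs at a single prime.
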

\begin{proof}
See  {\rm\cite[Lemma 2.2(3)]{hei}} and {\rm\cite[p. 130, Exercise 36]{L}}.
\end{proof}

We recall from \cite{AA} that an integral domain $R$ is called a $GCD$-domain if any two nonzero elements of $R$ 
have a greatest common divisor $(GCD)$, equivalently, any two nonzero elements of $R$ have a least common multiple $(LCM).$
Unique factorization domains ($UFD$'s) are well-known examples of $GCD$-domains.
Let $R$ be a $GCD$-domain. The least common multiple of elements $x,~y$ of $R$ is denoted by $[x,y]$.
Notice that for every elements $x,~y\in R$, $Rx\cap Ry=R[x,y]$.
Moreover, for every elements $x,y,z$ of $R$, we have
$[[x,y],z]=[x,[y,z]]$. So we denote $[[x,y],z]$ simply by $[x,y,z]$.\\

Recall that every principal ideal domain ($PID$) is a Dedekind domain.
\begin{theorem}\label{basic8}
Let $R$ be a $PID$ and $I$ be a nonzero proper ideal of $R$. The following conditions are equivalent:
\begin{enumerate}
\item $I$ is a 2-irreducible ideal;
\item $I$ is a 2-absorbing primary ideal;
\item Either $I=Rp^k$ for some prime (irreducible) element $p$ of $R$ and some natural number $n$,
or $I=R(p_1^np_2^m)$ for some distinct prime (irreducible) elements $p_1,~p_2$ of $R$ and some natural 
numbers $n,~m$.
\end{enumerate}
\end{theorem}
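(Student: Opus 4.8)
The plan is to reduce the whole statement to unique factorization in $R$. Since $R$ is a $PID$, I would write $I=Ra$ for a nonzero nonunit $a$ and fix a factorization $a=up_1^{n_1}\cdots p_k^{n_k}$ with $u$ a unit, the $p_i$ pairwise non-associate prime (irreducible) elements, and each $n_i\ge 1$. The two facts doing all the work are: $Rx\cap Ry=R[x,y]$ for all $x,y\in R$ (a $PID$ is a $GCD$-domain, as recalled in the excerpt), and $\sqrt{Ra}=R(p_1p_2\cdots p_k)$. Condition (3) is precisely the assertion ``$k\le 2$'', so the theorem amounts to showing that each of (1) and (2) forces $k\le 2$, and conversely that $k\le 2$ implies both (1) and (2).

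For ``(1)$\Rightarrow$(3)'' and ``(2)$\Rightarrow$(3)'' I would argue by contradiction: assume $k\ge 3$ and produce an explicit witness. For $2$-irreducibility, put $J=Rp_1^{n_1}$, $K=Rp_2^{n_2}$, $L=R\big(p_3^{n_3}\cdots p_k^{n_k}\big)$; then $J\cap K\cap L=R\big[p_1^{n_1},p_2^{n_2},p_3^{n_3}\cdots p_k^{n_k}\big]=Ra=I$, whereas $J\cap K=Rp_1^{n_1}p_2^{n_2}$ and likewise $J\cap L$, $K\cap L$ each omit a prime divisor of $a$ (namely $p_3$, $p_2$, $p_1$ respectively), so every pairwise intersection strictly contains $I$; hence $I$ is not $2$-irreducible. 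For the $2$-absorbing primary property, take $x=p_1^{n_1}$, $y=p_2^{n_2}$, $z=p_3^{n_3}\cdots p_k^{n_k}$: then $xyz=a\in I$, but $xy\notin I$ since $p_3\mid a$ and $p_3\nmid xy$, and $xz\notin\sqrt I$, $yz\notin\sqrt I$ since $p_1p_2\cdots p_k$ divides every element of $\sqrt I$ while $p_2\nmid xz$ and $p_1\nmid yz$; hence $I$ is not $2$-absorbing primary. In both cases the contradiction yields $k\le 2$.

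Conversely, suppose $k\le 2$. If $k=1$, then $I=Rp_1^{n_1}$ is irreducible and primary by Theorem~\ref{basic5} (a $PID$ is Dedekind), hence $2$-irreducible by the remark in the introduction that every irreducible ideal is $2$-irreducible, and $2$-absorbing primary since every primary ideal is (if $xyz\in I$ then either $x\in I$, so $xy\in I$, or $yz\in\sqrt I$). Suppose $k=2$, say $I=Rp_1^{n}p_2^{m}$. For ``(3)$\Rightarrow$(1)'', given ideals $Rb,Rc,Rd$ with $Rb\cap Rc\cap Rd=I$, the element $[b,c,d]$ is associate to $p_1^{n}p_2^{m}$, so each of $b,c,d$ divides $p_1^{n}p_2^{m}$ and, among them, some element has $p_1$-exponent $n$ and some has $p_2$-exponent $m$. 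If it is the same element, that element is associate to $p_1^{n}p_2^{m}$ and generates $I$, so the corresponding pairwise intersection equals $I$; if they are two distinct elements among $b,c,d$, then the least common multiple of those two is associate to $p_1^{n}p_2^{m}$, so again that pairwise intersection equals $I$. For ``(3)$\Rightarrow$(2)'' with $I=Rp_1^{n}p_2^{m}$ I would either quote the classification of $2$-absorbing primary principal ideals of a $UFD$ from {\rm\cite{Bt}}, or verify it directly: given $xyz\in I$, compare $p_1$- and $p_2$-adic valuations and split into cases according to whether $p_1\mid z$ and whether $p_2\mid z$; in each case one lands immediately in one of $xy\in I$, $xz\in\sqrt I=R(p_1p_2)$, or $yz\in\sqrt I$.

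I expect the only slightly delicate points to be the $k=2$ case of ``(3)$\Rightarrow$(1)'' — where one must allow for one of $b,c,d$ already generating $I$ — and, unless one simply cites {\rm\cite{Bt}}, the routine but mildly tedious valuation bookkeeping in ``(3)$\Rightarrow$(2)''. Everything else is a direct transcription of the hypotheses through unique factorization and the identity $Rx\cap Ry=R[x,y]$.
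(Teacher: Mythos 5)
Your proof is correct and, for the equivalence (1)$\Leftrightarrow$(3), follows essentially the same route as the paper: factor a generator of $I$, use $Rx\cap Ry=R[x,y]$, and compare exponents (your explicit grouping of the $k$ principal ideals into the three ideals $Rp_1^{n_1}$, $Rp_2^{n_2}$, $R(p_3^{n_3}\cdots p_k^{n_k})$ is in fact slightly more careful than the paper's, which applies $2$-irreducibility to a $k$-fold intersection without saying how it is regrouped, and your handling of the case where one element realizes both maximal exponents fills in the paper's ``without loss of generality''). The only real divergence is (2)$\Leftrightarrow$(3): the paper disposes of it by citing {\rm\cite[Corollary 2.12]{Bt}}, whereas you prove it directly via the witness $x=p_1^{n_1}$, $y=p_2^{n_2}$, $z=p_3^{n_3}\cdots p_k^{n_k}$ together with $\sqrt{Ra}=R(p_1\cdots p_k)$, and a short valuation case-split for the converse; this makes the argument self-contained at the cost of the bookkeeping you acknowledge, and both your witness computation and the case analysis check out.
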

\begin{proof}
(2)$\Leftrightarrow$(3) See {\rm\cite[Corollary 2.12]{Bt}}.\\
(1)$\Rightarrow$(3) Assume that $I=Ra$ where $0\neq a\in R$. Let 
$a=p_1^{n_1}p_2^{n_2}\cdots p_k^{n_k}$ be a prime decomposition for $a$. We show that either $k=1$ or $k=2$. 
Suppose that $k>2$. By {\rm\cite[p. 141, Exercise 5]{H}}, we have that 
$I=Rp_1^{n_1}\cap Rp_2^{n_2}\cap\cdots\cap Rp_k^{n_k}$. Now, since $I$ is 2-irreducible, there exist $1\leq i,j\leq k$ such that 
$I=Rp_i^{n_i}\cap Rp_j^{n_j}$, say $i=1,~j=2$. Therefore we have 
$I=Rp_1^{n_1}\cap Rp_2^{n_2}\subseteq Rp_3^{n_3}$, which is a contradiction.\\
(3)$\Rightarrow$(1) If $I=Rp^k$ for some prime element $p$ of $R$ and some natural number $n$,
then $I$ is irreducible, by Theorem \ref{basic5}, and so $I$ is 2-irreducible. Therefore, assume that
$I=R(p_1^np_2^m)$ for some distinct prime elements $p_1,~p_2$ of $R$ and some natural 
numbers $n,~m$. Let $I=Ra\cap Rb\cap Rc$ for some elements $a,~b$ and $c$ of $R$. Then
$a,~b$ and $c$ divide $p_1^np_2^m$, and so $a=p_1^{\alpha_1}p_2^{\alpha_2}$, 
$b=p_1^{\beta_1}p_2^{\beta_2}$ and $c=p_1^{\gamma_1}p_2^{\gamma_2}$ 
where  $\alpha_i,\beta_i,\gamma_i$ are some nonnegative integers.
On the other hand $I=Ra\cap Rb\cap Rc=R[a,b,c]=R(p_1^{\delta}p_2^{\varepsilon})$
in which $\delta=max\{\alpha_1,\beta_1,\gamma_1\}$ and $\varepsilon=max\{\alpha_2,\beta_2,\gamma_2\}$.
We can assume without loss of generality that $\delta=\alpha_1$ and $\varepsilon=\beta_2$. So 
$I=R(p_1^{\alpha_1}p_2^{\beta_2})=Ra\cap Rb$. Consequently, $I$ is 2-irreducible.
\end{proof}

A commutative ring $R$ is called a {\it von Neumann regular ring (or an absolutely flat ring)} if for any $a\in R$ there exists an $x\in R$ with $a^2x=a$,  
equivalently, $I=I^2$ for every ideal $I$ of $R$. 
\begin{remark}\label{fully}
Notice that a commutative ring $R$ is a von Neumann regular ring if and only if $IJ=I\cap J$ for any ideals $I,~J$ of $R$, by
{\rm\cite[Lemma 1.2]{jeo}}. Therefore over a commutative von Neumann regular ring the two concepts
of strongly 2-irreducible ideals and of 2-absorbing ideals are coincide.
\end{remark}

\begin{theorem}\label{basic2}
Let $I$ be a proper ideal of a ring $R$. Then the following conditions are equivalent:
\begin{enumerate}
\item $I$ is strongly 2-irreducible;
\item For every elements $x,y,z$ of $R$, $(Rx+Ry)\cap(Rx+Rz)\cap(Ry+Rz)\subseteq I$
implies that $(Rx+Ry)\cap(Rx+Rz)\subseteq I$ or $(Rx+Ry)\cap(Ry+Rz)\subseteq I$
or $(Rx+Rz)\cap(Ry+Rz)\subseteq I$.
\end{enumerate}
\end{theorem}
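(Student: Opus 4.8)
The implication (1)$\Rightarrow$(2) is immediate, since it is merely the special case of the defining condition of strong 2-irreducibility applied to the three ideals $J=Rx+Ry$, $K=Rx+Rz$, $L=Ry+Rz$. So the whole content of the theorem lies in the converse (2)$\Rightarrow$(1). For that direction, I would argue by contraposition: suppose $I$ is \emph{not} strongly 2-irreducible, so there exist ideals $J,K,L$ with $J\cap K\cap L\subseteq I$ but $J\cap K\not\subseteq I$, $J\cap L\not\subseteq I$ and $K\cap L\not\subseteq I$. The goal is to manufacture elements $x,y,z$ violating condition (2).

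\textbf{Choosing the witnesses.} Pick $u\in (J\cap K)\setminus I$, $v\in (J\cap L)\setminus I$, $w\in (K\cap L)\setminus I$. The natural attempt is to feed these into (2) somehow. Observe that $u+v\in J$ (since $u,v\in J$), $u+w\in K$, and $v+w\in L$. More usefully, I would set $x=u$, and try to show that $Rx+Ry$, etc., can be squeezed between the principal ideals generated by our witnesses and the larger ideals $J,K,L$. Concretely, with $x,y,z$ chosen so that $Rx+Ry\subseteq J$, $Rx+Rz\subseteq K$, $Ry+Rz\subseteq L$, the triple intersection $(Rx+Ry)\cap(Rx+Rz)\cap(Ry+Rz)\subseteq J\cap K\cap L\subseteq I$, so the hypothesis of (2) holds. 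One then needs each of the three pairwise intersections $(Rx+Ry)\cap(Rx+Rz)$, etc., to \emph{not} lie in $I$ — and here is where the witnesses $u,v,w$ must be recovered inside those intersections. The cleanest choice is probably $x=u-v$ or a similar combination; I would experiment with $x,y,z$ being suitable $\mathbb Z$-linear (or $R$-linear) combinations of $u,v,w$ engineered so that $u\in (Rx+Ry)\cap(Rx+Rz)$ while still $Rx+Ry\subseteq J$ and $Rx+Rz\subseteq K$.

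\textbf{The main obstacle.} The delicate point is the simultaneous bookkeeping: we need the three sums $Rx+Ry$, $Rx+Rz$, $Ry+Rz$ to be contained in $J,K,L$ respectively (an "upper bound" requirement forcing $x,y,z$ to lie in appropriate intersections of $J,K,L$), while at the same time each pairwise intersection of two of these sums must contain one of $u,v,w$ (a "lower bound" requirement). A symmetric assignment such as $x\in J\cap K$, $y\in J\cap L$, $z\in K\cap L$ automatically gives $Rx+Ry\subseteq J$, $Rx+Rz\subseteq K$, $Ry+Rz\subseteq L$; then taking $x=u$, $y=v$, $z=w$ directly, one checks $u\in Rx\subseteq (Rx+Ry)\cap(Rx+Rz)$, and $u\notin I$, so $(Rx+Ry)\cap(Rx+Rz)\not\subseteq I$, and symmetrically for the other two pairs. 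This settles it, and in fact shows the argument is easier than anticipated. I would double-check the edge cases (e.g.\ that the $Rx+Ry$ are genuinely proper or that $I$ being proper is used only trivially) and present the contrapositive cleanly, remarking that condition (2) is therefore a "local" criterion testing strong 2-irreducibility on finitely generated — indeed two-generated — ideals of a special form.
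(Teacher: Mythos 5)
Your proposal is correct and, once the exploratory detour about linear combinations is discarded, lands on exactly the paper's argument: take $x\in(J\cap K)\setminus I$, $y\in(J\cap L)\setminus I$, $z\in(K\cap L)\setminus I$, note that $(Rx+Ry)\cap(Rx+Rz)\cap(Ry+Rz)\subseteq J\cap K\cap L\subseteq I$ while each of $x,y,z$ witnesses that one of the pairwise intersections escapes $I$. No gaps; the middle paragraph's hedging can simply be deleted.
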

\begin{proof}
(1)$\Rightarrow$(2) There is nothing to prove.\\
(2)$\Rightarrow$(1) Suppose that $J,~K$ and $L$ are ideals of $R$ such that
neither $J\cap K\subseteq I$ nor $J\cap L\subseteq I$ nor $K\cap L\subseteq I$. Then there exist elements
$x,~y$ and $z$ of $R$ such that $x\in(J\cap K)\backslash I$ and $y\in(J\cap L)\backslash I$ and $z\in(K\cap L)\backslash I$.
On the other hand $(Rx+Ry)\cap(Rx+Rz)\cap(Ry+Rz)\subseteq(Rx+Ry)\subseteq J$,  
$(Rx+Ry)\cap(Rx+Rz)\cap(Ry+Rz)\subseteq(Rx+Rz)\subseteq K$ and 
$(Rx+Ry)\cap(Rx+Rz)\cap(Ry+Rz)\subseteq(Ry+Rz)\subseteq L$. Hence $(Rx+Ry)\cap(Rx+Rz)\cap(Ry+Rz)\subseteq I$,
and so by hypothesis either  $(Rx+Ry)\cap(Rx+Rz)\subseteq I$ or $(Rx+Ry)\cap(Ry+Rz)\subseteq I$
or $(Rx+Rz)\cap(Ry+Rz)\subseteq I$. Therefore, either $x\in I$ or $y\in I$
or $z\in I$, which any of these cases has a contradiction. Consequently $I$ is strongly 2-irreducible.
\end{proof}

A ring $R$ is called a {\it B\'{e}zout ring} if every finitely generated ideal of $R$ is principal. \\
As an immediate consequence of Theorem \ref{basic2} we have the next result:
\begin{corollary}\label{bz1}
Let $I$ be a proper ideal of a B\'{e}zout ring $R$. Then the following conditions are equivalent:
\begin{enumerate}
\item $I$ is strongly 2-irreducible;
\item $I$ is singly strongly 2-irreducible;
\end{enumerate}
\end{corollary}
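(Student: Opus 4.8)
The implication (1)$\Rightarrow$(2) is immediate from the definitions, since every principal ideal is in particular an ideal; this is the observation already recorded before the statement. So the content is in (2)$\Rightarrow$(1).

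The plan for (2)$\Rightarrow$(1) is to reduce to Theorem \ref{basic2} and then exploit the B\'{e}zout hypothesis. By Theorem \ref{basic2}, to show $I$ is strongly 2-irreducible it suffices to check that for all $x,y,z\in R$, if $(Rx+Ry)\cap(Rx+Rz)\cap(Ry+Rz)\subseteq I$ then one of the three pairwise intersections $(Rx+Ry)\cap(Rx+Rz)$, $(Rx+Ry)\cap(Ry+Rz)$, $(Rx+Rz)\cap(Ry+Rz)$ is contained in $I$. So fix such $x,y,z$.

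Now invoke the B\'{e}zout property: each of the finitely generated ideals $Rx+Ry$, $Rx+Rz$, $Ry+Rz$ is principal, say $Rx+Ry=Ru$, $Rx+Rz=Rv$ and $Ry+Rz=Rw$ for suitable $u,v,w\in R$. Then the hypothesis $(Rx+Ry)\cap(Rx+Rz)\cap(Ry+Rz)\subseteq I$ becomes $Ru\cap Rv\cap Rw\subseteq I$, and singly strong 2-irreducibility of $I$ yields $Ru\cap Rv\subseteq I$ or $Ru\cap Rw\subseteq I$ or $Rv\cap Rw\subseteq I$. Translating back through the identifications $Ru=Rx+Ry$, $Rv=Rx+Rz$, $Rw=Ry+Rz$ gives exactly one of the three required containments, so Theorem \ref{basic2} applies and $I$ is strongly 2-irreducible.

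There is essentially no obstacle here: the only thing that makes the argument work rather than being a triviality is Theorem \ref{basic2}, which lets us replace arbitrary ideals $J,K,L$ by the two‑generated ideals $Rx+Ry$, $Rx+Rz$, $Ry+Rz$; the B\'{e}zout condition then collapses these two‑generated ideals to principal ones, which is precisely the form the singly‑strongly‑2‑irreducible hypothesis can handle. If one tried to argue directly from the definition of strong 2-irreducibility one would be stuck turning arbitrary ideals into principal ones, which is why routing through Theorem \ref{basic2} is the key step.
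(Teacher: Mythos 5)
Your proof is correct and is exactly the argument the paper intends: the paper states the corollary as an ``immediate consequence of Theorem \ref{basic2}'', and your write-up (replace the two-generated ideals $Rx+Ry$, $Rx+Rz$, $Ry+Rz$ by principal generators via the B\'{e}zout hypothesis and then apply singly strong 2-irreducibility) is precisely the reasoning being left implicit.
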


Now we can state the following open problem.
\begin{problem}
Let $I$ be a singly strongly 2-irreducible ideal of a ring $R$. Is $I$ a strongly 2-irreducible ideal of $R$?
\end{problem}

\begin{proposition}\label{basic6}
Let $R$ be a ring. If $I$ is a strongly 2-irreducible ideal of $R$, then $I$ is a 2-irreducible ideal of $R$.
\end{proposition}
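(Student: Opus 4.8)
The plan is to unwind the two definitions and to observe that the equality version of the condition follows from the containment version together with a trivial reverse inclusion. So suppose $I$ is strongly 2-irreducible and that $I = J\cap K\cap L$ for some ideals $J,~K,~L$ of $R$. In particular $J\cap K\cap L\subseteq I$, and hence the defining property of a strongly 2-irreducible ideal yields that at least one of $J\cap K\subseteq I$, $J\cap L\subseteq I$, $K\cap L\subseteq I$ holds.

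Next I would dispose of the three cases symmetrically; consider, say, the first, $J\cap K\subseteq I$. Since $I = J\cap K\cap L\subseteq J\cap K$, the opposite inclusion is automatic, so $I = J\cap K$. Running the same argument with $L$ in place of $K$ (respectively in place of $J$) handles the remaining two cases and gives $I = J\cap L$ (respectively $I = K\cap L$). In every case we obtain $I = J\cap K$ or $I = J\cap L$ or $I = K\cap L$, which is exactly the assertion that $I$ is 2-irreducible. One should also note that a strongly 2-irreducible ideal is by definition proper, so $I$ meets whatever properness requirement is attached to 2-irreducibility.

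I do not expect any real obstacle: this is the ``containment form implies equality form'' phenomenon, entirely parallel to the fact that a strongly irreducible ideal is irreducible, and the proof is a two-line inclusion chase. The only point worth making explicit is that the passage from $J\cap K\cap L = I$ to $J\cap K\cap L\subseteq I$ is what licenses the use of the hypothesis, while the matching inclusion $I\subseteq J\cap K$ (and its analogues) is free because $I$ is an intersection containing each of the relevant pairwise intersections.
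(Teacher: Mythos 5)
Your proposal is correct and is essentially identical to the paper's own argument: pass from $I=J\cap K\cap L$ to the containment $J\cap K\cap L\subseteq I$, apply strong 2-irreducibility to obtain one of the pairwise containments, and note that the reverse inclusion is automatic since $I$ is the triple intersection. No issues.
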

\begin{proof}
Suppose that $I$ is a strongly 2-irreducible ideal of $R$. Let $J,~K$ and $L$ be ideals of $R$ such that
$I=J\cap K\cap L$. Since $J\cap K\cap L\subseteq I$, then either $J\cap K\subseteq I$ or 
$J\cap L\subseteq I$ or $K\cap L\subseteq I$. On the other hand $I\subseteq J\cap K$ and
$I\subseteq J\cap L$ and $I\subseteq K\cap L$. Consequently, either $I= J\cap K$ or
$I= J\cap L$ or $I= K\cap L$. Therefore $I$ is 2-irreducible.
\end{proof}

\begin{remark}
It is easy to check that the zero ideal $I=\{0\}$ of a ring $R$ is 2-irreducible if and only if $I$ is strongly 2-irreducible.
\end{remark}

\begin{proposition}\label{basic7}
Let $I$ be a proper ideal of an arithmetical ring $R$. The following conditions are equivalent:
\begin{enumerate}
\item $I$ is a 2-irreducible ideal of $R$;
\item $I$ is a strongly 2-irreducible ideal of $R$;
\item For every ideals $I_1~,I_2$ and $I_3$ of $R$ with $I\subseteq I_1$, 
$I_1\cap I_2\cap I_3\subseteq I$ implies that $I_1\cap I_2\subseteq I$
or $I_1\cap I_3\subseteq I$ or $I_2\cap I_3\subseteq I$.
\end{enumerate}
\end{proposition}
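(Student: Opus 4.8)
The plan is to establish the cycle of implications $(1)\Rightarrow(3)\Rightarrow(2)\Rightarrow(1)$, the last of which is exactly Proposition~\ref{basic6} and so needs no new work. Throughout I will use both equivalent forms of the arithmetical hypothesis: $(A+B)\cap C=(A\cap C)+(B\cap C)$ and $(A\cap B)+C=(A+C)\cap(B+C)$ for all ideals $A,B,C$ of $R$. The only computation I will need is the elementary fact that, for any ideals $J,K,L$ of $R$,
\[
(J+I)\cap(K+I)\cap(L+I)=(J\cap K\cap L)+I,
\]
obtained by applying the second identity twice, together with its variant $I_1\cap(I_2+I)\cap(I_3+I)=(I_1\cap I_2\cap I_3)+I$, valid whenever $I\subseteq I_1$ (where one additionally invokes $I_1\cap I=I$ and the first identity).

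For $(3)\Rightarrow(2)$, suppose $J,K,L$ are ideals with $J\cap K\cap L\subseteq I$. I would set $I_1=J+I$, $I_2=K+I$ and $I_3=L+I$; each of these contains $I$, and by the displayed identity $I_1\cap I_2\cap I_3=(J\cap K\cap L)+I=I$. Applying (3) yields one of $I_1\cap I_2\subseteq I$, $I_1\cap I_3\subseteq I$, $I_2\cap I_3\subseteq I$; since $I_1\cap I_2=(J\cap K)+I$ and likewise for the others, this says precisely that one of $J\cap K$, $J\cap L$, $K\cap L$ is contained in $I$. Hence $I$ is strongly $2$-irreducible.

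For $(1)\Rightarrow(3)$, let $I_1,I_2,I_3$ be ideals with $I\subseteq I_1$ and $I_1\cap I_2\cap I_3\subseteq I$. The point is to display $I$ itself as a triple intersection of ideals so that $2$-irreducibility applies: by the variant identity, $I=I_1\cap(I_2+I)\cap(I_3+I)$, because the right-hand side equals $(I_1\cap I_2\cap I_3)+I$ and $I_1\cap I_2\cap I_3\subseteq I$. Since $I$ is $2$-irreducible, one of $I=I_1\cap(I_2+I)$, $I=I_1\cap(I_3+I)$, $I=(I_2+I)\cap(I_3+I)$ must hold; rewriting the right-hand sides via the arithmetical identities — for instance $I_1\cap(I_2+I)=(I_1\cap I_2)+I$ and $(I_2+I)\cap(I_3+I)=(I_2\cap I_3)+I$ — these become $I_1\cap I_2\subseteq I$, $I_1\cap I_3\subseteq I$, $I_2\cap I_3\subseteq I$ respectively, which is (3).

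Since $(2)\Rightarrow(1)$ is Proposition~\ref{basic6}, the cycle is complete and all three conditions are equivalent. I do not anticipate a genuine obstacle here: the whole argument rests on the single mechanical observation that in an arithmetical ring one may harmlessly replace an arbitrary ideal $J$ by the larger ideal $J+I$ without affecting containments in $I$, since $(J+I)\cap(K+I)=(J\cap K)+I$. The only points needing a little care are applying the two distributive laws in the correct order and keeping track of the hypothesis $I\subseteq I_1$ in the step $(1)\Rightarrow(3)$.
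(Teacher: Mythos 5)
Your proof is correct and rests on exactly the same device as the paper's: replacing each ideal $J$ by $J+I$ and invoking the arithmetical identity $(J\cap K)+I=(J+I)\cap(K+I)$ to turn a containment $J\cap K\cap L\subseteq I$ into an equality to which ($2$-)irreducibility applies. The paper merely arranges the cycle differently — it proves $(1)\Rightarrow(2)$ directly and handles $(3)\Rightarrow(2)$ by enlarging only one of the three ideals (setting $I_1=J+I$, $I_2=K$, $I_3=L$) — but the substance is identical.
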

\begin{proof}
(1)$\Rightarrow$(2) Assume that $J,~K$ and $L$ are ideals of $R$ such that $J\cap K\cap L\subseteq I$.
Therefore $I=I+(J\cap K\cap L)=(I+J)\cap(I+K)\cap(I+L)$, since $R$ is an arithmetical ring. So 
either $I=(I+J)\cap(I+K)$ or $I=(I+J)\cap(I+L)$
or $I=(I+K)\cap(I+L)$, and thus either $J\cap K\subseteq I$ or $J\cap L\subseteq I$
or $K\cap L\subseteq I$. Hence $I$ is a strongly 2-irreducible ideal.\\
(2)$\Rightarrow$(3) is clear.\\
(3)$\Rightarrow$(2) Let $J,~K$ and $L$ be ideals of $R$ such that $J\cap K\cap L\subseteq I$.
Set $I_1:=J+I$, $I_2:=K$ and $I_3:=L$. Since $R$ is an arithmetical ring, then 
$I_1\cap I_2\cap I_3=(J+I)\cap K\cap L=(J\cap K\cap L)+(I\cap K\cap L)\subseteq I$. Hence
either $I_1\cap I_2\subseteq I$ or $I_1\cap I_3\subseteq I$ or $I_2\cap I_3\subseteq I$
which imply that either $J\cap K\subseteq I$ or $J\cap L\subseteq I$ or $K\cap L\subseteq I$,
respectively. Consequently, $I$ is a strongly 2-irreducible ideal of $R$.\\
(2)$\Rightarrow$(1) By Proposition \ref{basic6}.
\end{proof}

As an immediate consequence of  Theorem \ref{basic8} and Proposition \ref{basic7} we have the next result.
\begin{corollary}\label{basic9}
Let $R$ be a $PID$ and $I$ be a nonzero proper ideal of $R$. The following conditions are equivalent:
\begin{enumerate}
\item $I$ is a strongly 2-irreducible ideal;
\item $I$ is a 2-irreducible ideal;
\item $I$ is a 2-absorbing primary ideal;
\item Either $I=Rp^k$ for some prime (irreducible) element $p$ of $R$ and some natural number $n$,
or $I=R(p_1^np_2^m)$ for some distinct prime (irreducible) elements $p_1,~p_2$ of $R$ and some natural 
numbers $n,~m$.
\end{enumerate}
\end{corollary}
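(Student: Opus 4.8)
The plan is to obtain Corollary~\ref{basic9} by stitching together the two results it cites, so essentially all of the real work has already been carried out and only bookkeeping remains. First I would record the standard fact, already noted in the excerpt, that every $PID$ is a Dedekind domain and that a Dedekind domain is a Pr\"{u}fer domain, hence an arithmetical ring; thus the hypotheses of Proposition~\ref{basic7} are satisfied by our $PID$ $R$ together with the nonzero proper ideal $I$.

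Next I would invoke Proposition~\ref{basic7} directly. Over an arithmetical ring it asserts, among other things, that ``$I$ is a 2-irreducible ideal'' and ``$I$ is a strongly 2-irreducible ideal'' are equivalent, which is exactly the equivalence $(1)\Leftrightarrow(2)$. Then I would invoke Theorem~\ref{basic8}, whose hypotheses ($R$ a $PID$, $I$ a nonzero proper ideal) are precisely ours: it supplies $(2)\Leftrightarrow(3)\Leftrightarrow(4)$, namely that 2-irreducibility is equivalent to being 2-absorbing primary and to the explicit description $I=Rp^{k}$ or $I=R(p_{1}^{n}p_{2}^{m})$. Concatenating the two chains of equivalences closes the cycle $(1)\Leftrightarrow(2)\Leftrightarrow(3)\Leftrightarrow(4)$, which is the statement to be proved.

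As for where the content lies and what the main obstacle is: there is no genuine obstacle inside the corollary itself, since it is purely a matter of assembling earlier results. The one point deserving a moment's care is verifying that a $PID$ really does fall under the scope of Proposition~\ref{basic7} — that it is arithmetical — and this is immediate from the Dedekind/Pr\"{u}fer remark preceding Theorem~\ref{basic5}. All of the substantive arguments live upstream: the prime-factorization argument forcing the number of prime factors to be at most two (and the $\mathrm{LCM}$ computation showing the two displayed families of ideals are indeed 2-irreducible) is in the proof of Theorem~\ref{basic8}, while the arithmetical identity $I=(I+J)\cap(I+K)\cap(I+L)$ that promotes 2-irreducibility to strong 2-irreducibility is in the proof of Proposition~\ref{basic7}.
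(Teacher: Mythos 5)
Your proposal is correct and matches the paper exactly: the paper states Corollary~\ref{basic9} as an immediate consequence of Theorem~\ref{basic8} and Proposition~\ref{basic7}, which is precisely the assembly you carry out, including the observation that a $PID$ is a Dedekind domain, hence Pr\"{u}fer, hence arithmetical, so Proposition~\ref{basic7} applies.
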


The following example shows that the concepts of strongly irreducible (irreducible) ideals and of
strongly 2-irreducible (2-irreducible) ideals are different in general.
\begin{example}
Consider the ideal $6\mathbb{Z}$ of the ring $\mathbb{Z}$. 
By Corollary \ref{basic9}, $6\mathbb{Z}=(2.3)\mathbb{Z}$ is a strongly 2-irreducible
(a 2-irreducible) ideal of $\mathbb{Z}$. But, Theorem \ref{basic5} says that $6\mathbb{Z}$ 
is not a strongly irreducible (an irreducible) ideal of $\mathbb{Z}$.
\end{example}

It is well known that every von Neumann regular ring is a B\'{e}zout ring. By {\rm\cite[p. 119]{jen}}, every
B\'{e}zout ring is an arithmetical ring.

\begin{corollary}
Let $I$ be a proper ideal of a von Neumann regular ring $R$. The following conditions are equivalent:
\begin{enumerate}
\item $I$ is a 2-absorbing ideal of $R$;
\item $I$ is a 2-irreducible ideal of $R$;
\item $I$ is a strongly 2-irreducible ideal of $R$;
\item $I$ is a singly strongly 2-irreducible of $R$;
\item For every idempotent elements $e_1,e_2,e_3$ of $R$, 
$e_1e_2e_3\in I$ implies that either $e_1 e_2\in I$
or $e_1 e_3\in I$ or $e_2 e_3\in I$.
\end{enumerate}
\end{corollary}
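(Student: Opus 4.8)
The plan is to obtain the equivalences (1)--(4) essentially for free by chaining results already proved, and then to establish (4)$\Leftrightarrow$(5) directly from the idempotent structure of principal ideals in a von Neumann regular ring.

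First I would dispatch the implications among (1)--(4). By Remark \ref{fully}, over a von Neumann regular ring the notions of strongly 2-irreducible ideal and of 2-absorbing ideal coincide, giving (1)$\Leftrightarrow$(3). Since every von Neumann regular ring is a B\'{e}zout ring and every B\'{e}zout ring is arithmetical (as recorded just before the statement), Proposition \ref{basic7} yields (2)$\Leftrightarrow$(3), and Corollary \ref{bz1} yields (3)$\Leftrightarrow$(4). Hence (1)--(4) are pairwise equivalent, and it remains only to connect them with (5).

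Next I would prove (4)$\Leftrightarrow$(5). The key observation is that in a von Neumann regular ring every principal ideal is generated by an idempotent: if $a\in R$ and $a^2x=a$, then $e:=ax$ satisfies $e^2=e$ and $Ra=Re$. Moreover, by Remark \ref{fully} we have $Re\cap Rf=Re\cdot Rf=R(ef)$ for all $e,f\in R$, and a product of idempotents is again idempotent; since $I$ is an ideal, $R(ef)\subseteq I$ if and only if $ef\in I$. Consequently, for idempotents $e_1,e_2,e_3$ one has $Re_1\cap Re_2\cap Re_3=R(e_1e_2e_3)$ and $Re_i\cap Re_j=R(e_ie_j)$, so condition (5) is exactly the restriction of the singly strongly 2-irreducible condition to principal ideals generated by idempotents. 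As every principal ideal of $R$ has this form, the two conditions say the same thing: given $x,y,z\in R$, write $Rx=Re_1$, $Ry=Re_2$, $Rz=Re_3$ with the $e_i$ idempotent and translate; conversely (4) specializes to (5) by taking $x,y,z$ to be idempotents.

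The argument is mostly bookkeeping, and I do not anticipate a genuine obstacle. The one point that needs a little care is the two-way translation in the last step: one must check both that (5) implies the singly strongly 2-irreducible condition for \emph{arbitrary} $x,y,z$ (using that each $Rx$ equals some $Re$ with $e^2=e$) and that (4) specializes back to (5), together with the fact that passing between ``$e_1e_2e_3\in I$'' and ``$R(e_1e_2e_3)\subseteq I$'' is an equivalence because $I$ is an ideal.
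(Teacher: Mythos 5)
Your handling of (1)--(4) is exactly the paper's: Remark \ref{fully} for (1)$\Leftrightarrow$(3), Proposition \ref{basic7} for (2)$\Leftrightarrow$(3), and Corollary \ref{bz1} for (3)$\Leftrightarrow$(4). Where you diverge is in attaching (5). The paper closes the loop with two one-directional arrows: (1)$\Rightarrow$(5) is immediate (specialize the 2-absorbing element condition to idempotents), and (5)$\Rightarrow$(3) is obtained from Theorem \ref{basic2}, whose hypothesis involves the ideals $(Rx+Ry)\cap(Rx+Rz)\cap(Ry+Rz)$, combined with the fact that every \emph{finitely generated} ideal of a von Neumann regular ring is generated by an idempotent. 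You instead prove (4)$\Leftrightarrow$(5) directly, observing that every \emph{principal} ideal is generated by an idempotent ($e=ax$ when $a^2x=a$) and that $Re\cap Rf=Re\cdot Rf=R(ef)$ by Remark \ref{fully}, so condition (5) is literally the singly strongly 2-irreducible condition rewritten in terms of idempotent generators. Both arguments are correct. Yours is marginally more self-contained: it needs the idempotent-generation fact only for principal ideals (which you prove in one line) rather than for arbitrary finitely generated ideals, and it avoids invoking Theorem \ref{basic2}; the paper's route, on the other hand, gets (1)$\Rightarrow$(5) for free and leans on machinery already set up. The only point worth making explicit in your write-up is the one you already flagged: the passage between $e_1e_2e_3\in I$ and $R(e_1e_2e_3)\subseteq I$, and the fact that $ef$ is again idempotent so the translation is genuinely two-way.
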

\begin{proof}
(1)$\Leftrightarrow$(3) By Remark \ref{fully}.\\
(2)$\Leftrightarrow$(3) By Proposition \ref{basic7}.\\
(3)$\Leftrightarrow$(4) By Corollary \ref{bz1}.\\
(1)$\Rightarrow$(5) is evident.\\
(5)$\Rightarrow$(3) The proof follows from Theorem \ref{basic2} and the fact that
any finitely generated ideal of a von Neumann regular ring $R$ is generated by an idempotent element.
\end{proof}

\begin{proposition}\label{basic}
Let $I_1,~I_2$ be strongly irreducible ideals of a ring $R$. Then $I_1\cap I_2$ is a 
strongly 2-irreducible ideal of $R$.
\end{proposition}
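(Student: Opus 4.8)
The plan is to set $I:=I_1\cap I_2$ and verify the definition of strong $2$-irreducibility by hand. First I would note that $I$ is proper, since $I\subseteq I_1\subsetneq R$. The engine of the argument is the elementary observation that a strongly irreducible ideal $P$ absorbs one member of any finite intersection lying inside it: if $A\cap B\cap C\subseteq P$, then grouping this as $A\cap(B\cap C)\subseteq P$ and applying the definition of strong irreducibility once, and if necessary once more to $B\cap C\subseteq P$, yields $A\subseteq P$ or $B\subseteq P$ or $C\subseteq P$.

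Now suppose $J,K,L$ are ideals of $R$ with $J\cap K\cap L\subseteq I=I_1\cap I_2$. Then $J\cap K\cap L\subseteq I_1$ and $J\cap K\cap L\subseteq I_2$, so by the observation above one of $J,K,L$ is contained in $I_1$ and one of $J,K,L$ is contained in $I_2$. I would then split into two cases. If a single one of the three ideals, say $J$, satisfies $J\subseteq I_1$ and $J\subseteq I_2$, then $J\subseteq I$, hence $J\cap K\subseteq I$, which is one of the three required containments. Otherwise two distinct ones are captured, say $J\subseteq I_1$ and $K\subseteq I_2$; then $J\cap K\subseteq J\subseteq I_1$ and $J\cap K\subseteq K\subseteq I_2$, so $J\cap K\subseteq I_1\cap I_2=I$, again one of the three required containments. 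Up to relabelling every configuration falls into one of these two cases, so $J\cap K\subseteq I$ or $J\cap L\subseteq I$ or $K\cap L\subseteq I$, and $I$ is strongly $2$-irreducible.

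There is essentially no hard step here; the only point needing care is the bookkeeping in the case analysis, namely checking that, whichever of $J,K,L$ is swallowed by $I_1$ and whichever by $I_2$, the resulting intersection is literally one of $J\cap K$, $J\cap L$, $K\cap L$. This is automatic, because any two of the three ideals (whether they coincide or not) form one of these three pairs.
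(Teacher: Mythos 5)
Your proof is correct and is exactly the direct verification the paper has in mind (its own proof is given only as ``Straightforward''): apply strong irreducibility of $I_1$ and of $I_2$ to the triple intersection, then observe that whichever one or two of $J,K,L$ get absorbed, their pairwise intersection lands in $I_1\cap I_2$. The case analysis is complete and the properness check is a nice touch; nothing to add.
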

\begin{proof}
Strightforward.
\end{proof}

\begin{theorem}\label{basic11}
Let $R$ be a Noetherian ring. If $I$ is a 2-irreducible ideal of $R$,
then either $I$ is irreducible or $I$ is the intersection of exactly two
irreducible ideals. The converse is true when $R$ is also arithmetical.
\end{theorem}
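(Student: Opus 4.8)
The plan is to treat the two implications separately, using the Noetherian hypothesis only for the forward direction and the arithmetical hypothesis only for the converse.

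For the forward direction, I would begin from the classical fact that in a Noetherian ring every ideal is a finite intersection of irreducible ideals: otherwise the family of ideals failing this property would have a maximal element, which would then have to be reducible, a contradiction. So write $I=Q_1\cap Q_2\cap\cdots\cap Q_n$ with each $Q_i$ irreducible and with $n$ chosen as small as possible. If $n=1$, then $I=Q_1$ is irreducible. If $n\geq 3$, apply the definition of $2$-irreducibility to the grouping $I=Q_1\cap Q_2\cap(Q_3\cap\cdots\cap Q_n)$: each of the three resulting alternatives, namely $I=Q_1\cap Q_2$, or $I=Q_1\cap Q_3\cap\cdots\cap Q_n$, or $I=Q_2\cap Q_3\cap\cdots\cap Q_n$, writes $I$ as an intersection of strictly fewer than $n$ irreducible ideals, contradicting minimality of $n$. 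Hence $n\leq 2$, so $I$ is irreducible (case $n=1$) or the intersection of exactly two irreducible ideals (case $n=2$, where minimality also guarantees that $I$ is not irreducible and that neither $Q_i$ can be dropped).

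For the converse, assume in addition that $R$ is arithmetical (the Noetherian hypothesis is actually not used here). If $I$ is irreducible, it is $2$-irreducible, as observed in the Introduction. If $I=Q_1\cap Q_2$ with $Q_1,Q_2$ irreducible, I would first dispose of the degenerate case in which one of them equals $R$, since then $I$ is irreducible; so assume $Q_1,Q_2$ are proper. The key observation is that over an arithmetical ring a proper irreducible ideal $Q$ is automatically strongly irreducible: if $J\cap K\subseteq Q$, then $Q=Q+(J\cap K)=(Q+J)\cap(Q+K)$ by the arithmetical identity, and irreducibility of $Q$ forces $Q=Q+J$ or $Q=Q+K$, i.e.\ $J\subseteq Q$ or $K\subseteq Q$ (this is the one-intersection analogue of the computation used for (1)$\Rightarrow$(2) in Proposition \ref{basic7}). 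Thus $Q_1$ and $Q_2$ are strongly irreducible, so $I=Q_1\cap Q_2$ is strongly $2$-irreducible by Proposition \ref{basic}, and therefore $2$-irreducible by Proposition \ref{basic6}.

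I do not anticipate a serious obstacle in either direction. The only genuine content in the forward direction is the bookkeeping: verifying that minimality of $n$ is truly contradicted in all three cases produced by $2$-irreducibility, and checking the boundary cases ($I$ already irreducible, or a $Q_i$ equal to $R$) so that the wording ``exactly two'' is justified. For the converse, the one step worth isolating as a small lemma is that irreducible implies strongly irreducible over an arithmetical ring; after that, Propositions \ref{basic} and \ref{basic6} finish the argument immediately.
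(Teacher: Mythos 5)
Your proof is correct and follows essentially the same route as the paper's: a finite irreducible decomposition in the Noetherian ring (irredundant there, of minimal length here) cut down to at most two components via 2-irreducibility, and for the converse the fact that irreducible ideals of an arithmetical ring are strongly irreducible (which the paper cites from Heinzer--Ratliff--Rush and you prove inline) combined with Propositions \ref{basic} and \ref{basic6}. If anything, your minimality argument spells out the reduction from $n$ components to two more carefully than the paper's one-line assertion that $I=I_i\cap I_j$ for some $i,j$.
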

\begin{proof}
Assume that $I$ is 2-irreducible. By {\rm\cite[Proposition 4.33]{Sh}}, $I$ can be written as a finite irredundant irreducible decomposition $I=I_1\cap I_2\cap\cdots\cap I_k$. We show that either $k=1$ or $k=2$. If $k>3$, then since $I$ is 
2-irreducible, $I=I_i\cap I_j$ for some $1\leq i,j\leq k$, say $i=1$ and $j=2$.
Therefore $I_1\cap I_2\subseteq I_3$, which is a contradiction. For the second atatement, let $R$ be arithmetical, and $I$ be the intersection of two irreducible ideals. Since $R$ is arithmetical, every irreducible ideal is strongly irreducible, {\rm\cite[Lemma 2.2(3)]{hei}}. Now, apply Proposition \ref{basic} to see that $I$ is strongly 2-irreducible, and so $I$ is 2-irreducible. 
\end{proof}

\begin{corollary}
Let $R$ be a Noetherian ring and $I$ be a proper ideal of $R$.
If $I$ is 2-irreducible, then $I$ is a 2-absorbing primary ideal of $R$.
\end{corollary}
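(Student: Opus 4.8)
The plan is to reduce the statement about $2$-irreducible ideals of a Noetherian ring $R$ to the already-established structure theorem for $2$-absorbing primary ideals together with the Noetherian decomposition result. By Theorem~\ref{basic11}, a $2$-irreducible ideal $I$ of a Noetherian ring is either irreducible or the intersection of exactly two irreducible ideals. The first step is to handle the irreducible case: if $I$ is irreducible in a Noetherian ring, then $I$ is primary (this is the classical Noether result, e.g. \cite[Proposition 4.33]{Sh} or standard commutative algebra), and every primary ideal is trivially $2$-absorbing primary, since whenever $abc\in I$ with $ab\notin I$ we have $c\in\sqrt{I}$, hence $ac\in\sqrt I$. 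So the irreducible case is immediate.

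The main work is the case $I=I_1\cap I_2$ with $I_1,I_2$ irreducible, hence primary, say $\mathfrak p_1$-primary and $\mathfrak p_2$-primary respectively. I would then verify directly from the definition that $I=I_1\cap I_2$ is $2$-absorbing primary. Suppose $abc\in I$, so $abc\in I_1$ and $abc\in I_2$. Applying the primary condition in each factor: from $abc\in I_1$ either $ab\in I_1$ or $c\in\mathfrak p_1=\sqrt{I_1}$, and similarly for $I_2$. One must also feed back the cases where a two-fold product lands in a prime: e.g. $ab\in\sqrt{I_j}$ suffices to push the analysis forward. The key point is that $\sqrt I=\sqrt{I_1}\cap\sqrt{I_2}=\mathfrak p_1\cap\mathfrak p_2$, and a short case analysis on which of $ab$, $ac$, $bc$ lies in which radical shows that in every branch one gets $ab\in I$ or $ac\in\sqrt I$ or $bc\in\sqrt I$. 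Concretely: if $ab\in I_1$ and $ab\in I_2$ we are done; if $ab\in I_1$ but $ab\notin I_2$, then $c\in\mathfrak p_2$, and then from $abc\in I_1$ plus whatever forces $a$ or $b$ into $\mathfrak p_1$ one concludes $ac$ or $bc\in\mathfrak p_1$, while $c\in\mathfrak p_2$ already gives $ac,bc\in\mathfrak p_2$; intersecting gives $ac\in\sqrt I$ or $bc\in\sqrt I$. The symmetric branches are identical. This is essentially the content of \cite[Theorem 2.4]{Bt}, which states that the intersection of two primary ideals whose radicals are comparable-or-otherwise is $2$-absorbing primary, so I would cite that result rather than redo the bookkeeping.

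Alternatively, and perhaps more cleanly, I would simply invoke \cite[Theorem 2.4 and Corollary 2.12]{Bt}: there it is shown that an intersection of two primary ideals is $2$-absorbing primary (the radical of such an intersection has at most two minimal primes). Since Theorem~\ref{basic11} already expresses a $2$-irreducible $I$ as an intersection of at most two irreducible — hence, in the Noetherian setting, primary — ideals, the corollary follows by direct citation. I expect the only mild obstacle is confirming that the cited lemma of \cite{Bt} is stated for \emph{arbitrary} intersections of two primary ideals (not merely those with equal radicals); if the reference only covers a special case, one falls back on the explicit case analysis sketched above, which is elementary but slightly tedious. Either way no Noetherian hypothesis beyond that needed to get the irreducible/primary decomposition is used, so the proof is short.
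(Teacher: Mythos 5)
Your proposal is correct and follows essentially the same route as the paper: apply Theorem~\ref{basic11} to write $I$ as an irreducible ideal or an intersection of two irreducible ideals, use the classical fact that irreducible ideals of a Noetherian ring are primary, and conclude by citing \cite[Theorem 2.4]{Bt} that an intersection of two primary ideals is $2$-absorbing primary. Your worry about the scope of that citation is unfounded — the cited result does cover arbitrary intersections of two primary ideals — so the fallback case analysis is not needed.
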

\begin{proof}
Assume that $I$ is 2-irreducible. By the fact that every irreducible ideal of a Noetherian ring
is primary and regarding Theorem \ref{basic11}, we have either $I$ is a primary ideal or is the intersection
of two primary ideals. It is clear that every primary ideal is 2-absorbing primary, also the intersection
of two primary ideals is a 2-absorbing primary ideal, by {\rm\cite[Theorem 2.4]{Bt}}.
\end{proof}

\begin{proposition}
Let $R$ be a ring, and let $P_1,~P_2$ and $P_3$ be pairwise comaximal prime ideals of $R$. Then
$P_1P_2P_3$ is not a 2-irreducible ideal.
\end{proposition}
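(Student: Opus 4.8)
The plan is to reduce $P_1P_2P_3$ to an intersection of the three primes and then apply the definition of 2-irreducibility directly. First I would record the standard fact that comaximality passes to products: from $P_1+P_3=R$ and $P_2+P_3=R$ we get $R=(P_1+P_3)(P_2+P_3)\subseteq P_1P_2+P_3$, hence $P_1P_2+P_3=R$; combining this with $P_1+P_2=R$ and applying the usual lemma ``$IJ=I\cap J$ whenever $I+J=R$'' twice yields
\[
P_1P_2P_3=(P_1\cap P_2)\cap P_3=P_1\cap P_2\cap P_3 .
\]
Note also that $P_1P_2P_3\subseteq P_1\subsetneq R$, so this is a proper ideal and the definition of 2-irreducibility is applicable to it.

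Next, I would argue by contradiction: suppose $I:=P_1P_2P_3$ is 2-irreducible. Applying the definition to the equality $I=P_1\cap P_2\cap P_3$ (taking $J=P_1$, $K=P_2$, $L=P_3$) forces $I=P_i\cap P_j$ for some pair $i\neq j$; by symmetry we may assume $I=P_1\cap P_2$. Combining this with $I=P_1\cap P_2\cap P_3$ gives $P_1\cap P_2\subseteq P_3$, and therefore $P_1P_2\subseteq P_1\cap P_2\subseteq P_3$.

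Finally, since $P_3$ is prime, $P_1P_2\subseteq P_3$ forces $P_1\subseteq P_3$ or $P_2\subseteq P_3$; as $P_3$ is a proper ideal, either alternative contradicts $P_1+P_3=R$ (respectively $P_2+P_3=R$), i.e. pairwise comaximality. Hence $P_1P_2P_3$ is not 2-irreducible. There is no genuine obstacle in this argument; the only step requiring a little care is the first one, namely verifying that the product of the three pairwise comaximal primes equals their intersection, which is exactly what makes the hypothesis of 2-irreducibility bite.
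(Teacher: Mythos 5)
Your argument is correct and complete: the identity $P_1P_2P_3=P_1\cap P_2\cap P_3$ via pairwise comaximality, followed by the forced inclusion $P_1\cap P_2\subseteq P_3$ and the contradiction with primeness and comaximality, is exactly the standard argument the paper has in mind when it dismisses the proof as ``easy.'' Nothing is missing; you have simply written out the details the authors omitted.
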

\begin{proof}
The proof is easy.
\end{proof}

\begin{corollary}
If $R$ is a ring such that every proper ideal of $R$ is 2-irreducible, then 
$R$ has at most two maximal ideals.
\end{corollary}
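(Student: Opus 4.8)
The plan is to prove the contrapositive using the Proposition immediately preceding it. Suppose, for contradiction, that $R$ has three (or more) maximal ideals, and fix three distinct ones, say $M_1,M_2,M_3$. First I would recall two elementary facts: every maximal ideal is prime, and any two \emph{distinct} maximal ideals $M,M'$ satisfy $M+M'=R$ (since $M+M'$ is an ideal properly containing the maximal ideal $M$). Hence $M_1,M_2,M_3$ are pairwise comaximal prime ideals, so the Proposition applies and tells us that $M_1M_2M_3$ is not a $2$-irreducible ideal of $R$.

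Next I would check that $M_1M_2M_3$ is a \emph{proper} ideal of $R$: indeed $M_1M_2M_3\subseteq M_1\subsetneq R$. Thus $M_1M_2M_3$ is a proper ideal of $R$ that fails to be $2$-irreducible, contradicting the standing hypothesis that every proper ideal of $R$ is $2$-irreducible. Therefore $R$ cannot possess three or more maximal ideals, i.e. $R$ has at most two maximal ideals, which is the assertion.

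I do not expect any genuine obstacle here; the corollary is an essentially immediate consequence of the Proposition, and the only things to verify are the two routine observations above (comaximality of distinct maximal ideals and properness of their product). The one point worth stating explicitly, so the argument is airtight, is that the hypothesis is applied to the specific proper ideal $M_1M_2M_3$ rather than to the $M_i$ themselves.
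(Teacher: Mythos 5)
Your proof is correct and is exactly the argument the paper intends: the corollary is stated as an immediate consequence of the preceding Proposition, and your verification that three distinct maximal ideals are pairwise comaximal primes whose product is a proper non-2-irreducible ideal fills in precisely the routine details the authors left implicit.
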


\begin{theorem}
Let $I$ be a radical ideal of a ring $R$, i.e., $I=\sqrt{I}$. The following conditions are equivalent:
\begin{enumerate}
\item $I$ is strongly 2-irreducible;
\item $I$ is 2-absorbing;
\item $I$ is 2-absorbing primary;
\item $I$ is either a prime ideal of $R$ or is an intersection of exactly two prime ideals of $R$.
\end{enumerate}
\end{theorem}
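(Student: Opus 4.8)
The plan is to prove the cycle $(1)\Rightarrow(2)\Rightarrow(4)\Rightarrow(1)$ and, separately, the equivalence $(2)\Leftrightarrow(3)$, using the hypothesis $I=\sqrt{I}$ throughout. The equivalence $(2)\Leftrightarrow(3)$ is essentially formal: $(2)\Rightarrow(3)$ holds for every ideal because $I\subseteq\sqrt{I}$, while for $(3)\Rightarrow(2)$ one only has to note that when $I$ is $2$-absorbing primary and $abc\in I$, the conclusion $ab\in I$ or $ac\in\sqrt{I}$ or $bc\in\sqrt{I}$ collapses to $ab\in I$ or $ac\in I$ or $bc\in I$ because $\sqrt{I}=I$.

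For $(1)\Rightarrow(2)$ I would reduce strong $2$-irreducibility to the elementwise $2$-absorbing condition. Given $a,b,c\in R$ with $abc\in I$, observe that $Ra\cap Rb\cap Rc$ is contained in each of $Ra$, $Rb$, $Rc$, so $(Ra\cap Rb\cap Rc)^{3}\subseteq(Ra)(Rb)(Rc)=Rabc\subseteq I$; hence the cube of every element of $Ra\cap Rb\cap Rc$ lies in $I$, and therefore $Ra\cap Rb\cap Rc\subseteq\sqrt{I}=I$. Applying strong $2$-irreducibility to the ideals $Ra$, $Rb$, $Rc$ gives $Ra\cap Rb\subseteq I$ or $Ra\cap Rc\subseteq I$ or $Rb\cap Rc\subseteq I$, and since $ab\in Ra\cap Rb$, $ac\in Ra\cap Rc$, $bc\in Rb\cap Rc$, this yields $ab\in I$ or $ac\in I$ or $bc\in I$. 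Thus $I$ is $2$-absorbing.

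For $(2)\Rightarrow(4)$ I would appeal to Badawi's description of the radical of a $2$-absorbing ideal \cite{B}: if $I$ is $2$-absorbing, then $\sqrt{I}$ is either a prime ideal or the intersection of exactly two prime ideals (the minimal primes over $I$). Since here $I=\sqrt{I}$, this is precisely condition $(4)$. I expect this to be the one genuinely substantial step, as it is the only implication not obtainable by a short manipulation from the definitions; making it self-contained would require reproving that a $2$-absorbing ideal has at most two minimal primes $P_{1},P_{2}$ with $P_{1}P_{2}\subseteq I$.

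Finally, $(4)\Rightarrow(1)$. If $I=P$ is prime, then $P$ is strongly irreducible — from $J\cap K\subseteq P$ one gets $JK\subseteq J\cap K\subseteq P$, hence $J\subseteq P$ or $K\subseteq P$ — and every strongly irreducible ideal is strongly $2$-irreducible. If $I=P_{1}\cap P_{2}$ for prime ideals $P_{1},P_{2}$, then $I$ is an intersection of two strongly irreducible ideals, so $I$ is strongly $2$-irreducible by Proposition \ref{basic}. This closes the cycle and, together with $(2)\Leftrightarrow(3)$, establishes the equivalence of all four conditions.
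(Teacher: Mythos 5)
Your proposal is correct and follows essentially the same route as the paper: the implication cycle $(1)\Rightarrow(2)\Rightarrow(4)\Rightarrow(1)$ together with the formal equivalence $(2)\Leftrightarrow(3)$, with $(2)\Rightarrow(4)$ resting on Badawi's theorem and $(4)\Rightarrow(1)$ on the fact that an intersection of two strongly irreducible (here, prime) ideals is strongly $2$-irreducible. The only cosmetic difference is in $(1)\Rightarrow(2)$, where you argue elementwise via $(Ra\cap Rb\cap Rc)^{3}\subseteq Rabc\subseteq I$ while the paper works with arbitrary ideals $J,K,L$ and the inclusion $J\cap K\cap L\subseteq\sqrt{JKL}\subseteq\sqrt{I}=I$; both are valid.
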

\begin{proof}
(1)$\Rightarrow$(2) Assume that $I$ is strongly 2-irreducible. Let $J,~K$ and $L$ be
ideals of $R$ such that $JKL\subseteq I$. Then $J\cap K\cap L\subseteq\sqrt{J\cap K\cap L}\subseteq\sqrt{I}=I$.
So, either $J\cap K\subseteq I$ or $J\cap L\subseteq I$ or $K\cap L\subseteq I$. Hence either
$JK\subseteq I$ or $JL\subseteq I$ or $KL\subseteq I$. Consequently $I$ is 2-absorbing.\\
(2)$\Leftrightarrow$(3) is obvious.\\
(2)$\Rightarrow$(4) If $I$ is a 2-absorbing ideal, then either $\sqrt{I}$ is a prime ideal or is an intersection of exactly 
two prime ideals, {\rm\cite[Theorem 2.4]{B}}. Now, we prove the claim by assumption that $I=\sqrt{I}$.\\
(4)$\Rightarrow$(1) By Proposition \ref{basic}.
\end{proof}

\begin{theorem}
Let $f:R\rightarrow S$ be a surjective homomorphism of commutative rings, and let 
$I$ be an ideal of $R$ containing $Ker(f)$. Then,
\begin{enumerate}
\item If $I$ is a strongly 2-irreducible ideal of $R$, then $I^e$ is a strongly 2-irreducible ideal of $S$.
\item $I$ is a 2-irreducible ideal of $R$ if and only if $I^e$ is a 2-irreducible ideal of $S$.
\end{enumerate}
\end{theorem}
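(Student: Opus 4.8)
The plan is to exploit the standard lattice correspondence attached to a surjective homomorphism, under which $I^e=f(I)$. Since $f$ is surjective and $Ker(f)\subseteq I$, I would first record the following routine facts: (a) for any ideal $A$ of $S$, the contraction $A^c=f^{-1}(A)$ is an ideal of $R$ containing $Ker(f)$, and $(A^c)^e=f(f^{-1}(A))=A$; (b) for an ideal $J$ of $R$ with $Ker(f)\subseteq J$ one has $(J^e)^c=f^{-1}(f(J))=J+Ker(f)=J$; (c) contraction commutes with finite intersections, so $f^{-1}(A\cap B\cap C)=f^{-1}(A)\cap f^{-1}(B)\cap f^{-1}(C)$ for all ideals $A,B,C$ of $S$; and (d) if $J,K,L$ are ideals of $R$ each containing $Ker(f)$, then $f(J\cap K\cap L)=f(J)\cap f(K)\cap f(L)$. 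Only (d) needs an argument: the inclusion ``$\subseteq$'' is clear, and if $s=f(j)=f(k)=f(l)$ with $j\in J$, $k\in K$, $l\in L$, then $j-k,\,j-l\in Ker(f)$ force $j\in J\cap K\cap L$, whence $s\in f(J\cap K\cap L)$. I would also note that whenever $I=J\cap K\cap L$ with $Ker(f)\subseteq I$, automatically $Ker(f)\subseteq J,K,L$, so (d) is always applicable in what follows.

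For part (1), suppose $I$ is strongly 2-irreducible in $R$ and let $A,B,C$ be ideals of $S$ with $A\cap B\cap C\subseteq I^e$. Put $J=f^{-1}(A)$, $K=f^{-1}(B)$, $L=f^{-1}(C)$; by (c) and (b), $J\cap K\cap L=f^{-1}(A\cap B\cap C)\subseteq f^{-1}(I^e)=I$. Strong 2-irreducibility of $I$ gives, say, $J\cap K\subseteq I$; applying $f$ and using (d) and (a), $A\cap B=f(J)\cap f(K)=f(J\cap K)\subseteq f(I)=I^e$. The other two cases are symmetric, so $I^e$ is strongly 2-irreducible.

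For the forward direction of part (2), assume $I$ is 2-irreducible and let $A,B,C$ be ideals of $S$ with $I^e=A\cap B\cap C$. With $J,K,L$ the contractions as above, (c) and (b) give $J\cap K\cap L=f^{-1}(I^e)=I$. Hence $I$ equals one of $J\cap K$, $J\cap L$, $K\cap L$, say $I=J\cap K$; applying $f$ and using (d) and (a), $I^e=f(I)=f(J\cap K)=A\cap B$, so $I^e$ is 2-irreducible. Conversely, assume $I^e$ is 2-irreducible and let $J,K,L$ be ideals of $R$ with $I=J\cap K\cap L$; then $Ker(f)\subseteq J,K,L$, so (d) gives $I^e=f(I)=f(J)\cap f(K)\cap f(L)$. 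By 2-irreducibility of $I^e$, say $I^e=f(J)\cap f(K)=f(J\cap K)$, and contracting with (b) gives $I=f^{-1}(I^e)=f^{-1}(f(J\cap K))=J\cap K$. Thus $I$ is 2-irreducible.

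The argument is essentially formal once (a)--(d) are in place; the only point requiring care, and hence the ``main obstacle'' (mild as it is), is fact (d) --- the commutation of $f$ with the relevant finite intersections --- which is exactly where the hypothesis $Ker(f)\subseteq I$ enters, together with the observation that this containment propagates to every ideal appearing in a decomposition of $I$. It is also worth double-checking that surjectivity of $f$ is genuinely used, namely in (a) for $f(f^{-1}(A))=A$, and that no contraction statement for strongly 2-irreducible ideals is being asserted, since that would run the correspondence in the other direction and lies outside the present claim.
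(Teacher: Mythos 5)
Your argument is correct and is essentially the paper's own proof: both pass through the standard bijection $A\mapsto A^{c}$, $J\mapsto J^{e}$ between ideals of $S$ and ideals of $R$ containing $\mathrm{Ker}(f)$, together with the fact that this correspondence preserves finite intersections (your fact (d), which the paper simply asserts and you verify). The only point you leave implicit is that $I^{e}$ is a \emph{proper} ideal of $S$, which is immediate from $(I^{e})^{c}=I\neq R$.
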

\begin{proof}
Since $f$ is surjective, $J^{ce}=J$ for every ideal $J$ of $S$. Moreover,
$(K\cap L)^e=K^e\cap L^e$ and $K^{ec}=K$ for every ideals $K,~L$ of $R$ which contain ${\rm Ker(f)}$. \\
(1) Suppose that $I$ is a strongly 2-irreducible ideal of $R$. If $I^e=S$, then $I=I^{ec}=R$, which is a contradiction.
Let $J_1,~J_2$ and $J_3$ be ideals of $S$ such that $J_1\cap J_2\cap J_3\subseteq I^e$.
Therefore $J_1^c\cap J_2^c\cap J_3^c\subseteq I^{ec}=I$. So, either $J_1^c\cap J_2^c\subseteq I$
or $J_1^c\cap J_3^c\subseteq I$ or $J_2^c\cap J_3^c\subseteq I$. Without loss of generality, we may assume
that $J_1^c\cap J_2^c\subseteq I$. So, $J_1\cap J_2=(J_1\cap J_2)^{ce}\subseteq I^e$. Hence $I^e$ is strongly 2-irreducible.\\
(2) The necessity is similar to part (1). Conversely, let $I^e$ be a strongly 2-irreducible ideal of $S$, and let $I_1,~I_2$ and $I_3$
be ideals of $R$ such that $I=I_1\cap I_2\cap I_3$. Then $I^e=I_1^e\cap I_2^e\cap I_3^e$.
Hence, either $I^e=I_1^e\cap I_2^e$ or $I^e=I_1^e\cap I_3^e$
or  $I^e=I_2^e\cap I_3^e$. We may assume that $I^e=I_1^e\cap I_2^e$. Therefore, $I=I^{ec}=I_1^{ec}\cap I_2^{ec}=I_1\cap I_2$. Consequently, $I$ is strongly 2-irreducible.
\end{proof}

\begin{corollary}
Let $f:R\rightarrow S$ be a surjective homomorphism of commutative rings.
There is a one-to-one correspondence between the 2-irreducible ideals of $R$ 
which contain $Ker(f)$ and 2-irreducible ideals of $S$.
\end{corollary}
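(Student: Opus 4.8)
The plan is to derive this as an immediate consequence of part (2) of the preceding theorem together with the classical correspondence theorem for surjective ring homomorphisms. First I would recall that, since $f$ is surjective, the assignments $I\mapsto I^{e}=f(I)$ and $J\mapsto J^{c}=f^{-1}(J)$ are mutually inverse, inclusion-preserving bijections between the set of ideals of $R$ containing $\ker f$ and the set of all ideals of $S$; concretely $I^{ec}=I$ whenever $\ker f\subseteq I$, and $J^{ce}=J$ for every ideal $J$ of $S$. In particular this bijection sends proper ideals to proper ideals, because $I^{e}=S$ forces $I=I^{ec}=R$.

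Next I would restrict this bijection to the 2-irreducible ideals. By the forward implication of part (2) of the preceding theorem, if $I$ is a 2-irreducible ideal of $R$ with $\ker f\subseteq I$, then $I^{e}$ is a 2-irreducible ideal of $S$. Conversely, given a 2-irreducible ideal $J$ of $S$, its contraction $J^{c}$ contains $\ker f$ and satisfies $(J^{c})^{e}=J$, so applying the reverse implication of part (2) with $I=J^{c}$ shows that $J^{c}$ is 2-irreducible in $R$. Hence $I\mapsto I^{e}$ carries the class of 2-irreducible ideals of $R$ containing $\ker f$ into the 2-irreducible ideals of $S$, and $J\mapsto J^{c}$ carries the 2-irreducible ideals of $S$ back into that class; since these two maps are already mutually inverse on the larger classes of ideals, they remain mutually inverse after restriction, which is exactly the asserted one-to-one correspondence.

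I do not anticipate any real obstacle here: the content of the corollary is entirely contained in the preceding theorem, and the only point requiring a small amount of care is to confirm that the hypothesis "$I$ contains $\ker f$" is precisely what guarantees $I^{ec}=I$, so that the restricted maps are genuinely inverse to one another, and that passing to contractions ensures no 2-irreducible ideal of $S$ is omitted.
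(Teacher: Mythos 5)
Your proposal is correct and matches the paper's intent: the corollary is stated as an immediate consequence of part (2) of the preceding theorem combined with the standard extension--contraction bijection for surjective homomorphisms, which is exactly the argument you give. The one point needing care --- that $I^{ec}=I$ for ideals containing $\ker f$ and $J^{ce}=J$ for all ideals of $S$, so the restricted maps remain mutually inverse --- is handled properly.
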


Recall that a ring $R$ is called a {\it Laskerian ring} if every proper ideal of $R$ has a primary decomposition.
Noetherian rings are some examples of Laskerian rings.

Let $S$ be a multiplicatively closed subset of a ring $R$. In the next theorem, 
consider the natural homomorphism $f:R\rightarrow S^{-1}R$ defined by
$f(x)=x/1$.
\begin{theorem}\label{basic4}
Let $I$ be a proper ideal of a ring $R$ and $S$ be a multiplicatively closed set in $R$.
\begin{enumerate}
\item If $I$ is a strongly 2-irreducible ideal of $S^{-1}R$, 
then $I^c$ is a strongly 2-irreducible ideal of $R$.
\item If $I$ is a  primary strongly 2-irreducible ideal of $R$ such that
$I\cap S=\emptyset$, then $I^e$ is a strongly 2-irreducible ideal of $S^{-1}R$.
\item If $I$ is a primary ideal of $R$ such that $I^{e}$ is a strongly 2-irreducible ideal of $S^{-1}R$,
then $I$ is a strongly 2-irreducible ideal of $R$.
\item If $R^\prime$ is a faithfully flat extension ring of $R$ and if $IR^\prime$
is a strongly 2-irreducible ideal of $R^{\prime}$, then $I$ is a strongly 2-irreducible ideal of $R$.
\item If $I$ is strongly 2-irreducible and $H$ is an ideal of $R$ such that $H\subseteq I$, then $I/H$ is a 
strongly 2-irreducible ideal of $R/H$.
\item If $R$ is a Laskerian ring, then every strongly 2-irreducible ideal is either a primary ideal or is the intersection of two primary ideals.
\end{enumerate}
\end{theorem}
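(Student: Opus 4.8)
The plan is to obtain parts (1)--(5) from the extension--contraction calculus for ring maps, and part (6) from primary decomposition combined with the bare definition of strong 2-irreducibility. The ingredients I will invoke throughout are: (a) for a \emph{flat} ring homomorphism $R\to T$ and ideals $A,B,C$ of $R$, one has $(A\cap B\cap C)T=AT\cap BT\cap CT$, obtained by tensoring the injection $R/(A\cap B\cap C)\hookrightarrow R/A\oplus R/B\oplus R/C$ with $T$; (b) for the flat map $R\to S^{-1}R$ one has $J^{ce}=J$ for every ideal $J$ of $S^{-1}R$, while $I^{ec}=I$ whenever $I$ is primary with $I\cap S=\emptyset$ (note that then $\sqrt I\cap S=\emptyset$); (c) for a faithfully flat extension $R\subseteq R'$ one has $IR'\cap R=I$ for every ideal $I$ of $R$, and $IR'=R'$ forces $I=R$. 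In each transfer step, properness of the produced ideal will be checked by asking whether $1$ lies in it.

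For (1) I would argue: if $J\cap K\cap L\subseteq I^c$ for ideals $J,K,L$ of $R$, then extending and using (a) gives $J^e\cap K^e\cap L^e\subseteq I^{ce}=I$, so the hypothesis forces one of the three pairwise intersections, say $J^e\cap K^e$, to lie in $I$; contracting and using $J\cap K\subseteq(J^e)^c\cap(K^e)^c=(J^e\cap K^e)^c$ gives $J\cap K\subseteq I^c$, and $I\neq S^{-1}R$ forces $I^c\neq R$. For (2), the hypotheses ``$I$ primary'' and ``$I\cap S=\emptyset$'' give $I^{ec}=I$ and $I^e\neq S^{-1}R$; given $J_1\cap J_2\cap J_3\subseteq I^e$, contracting yields $J_1^c\cap J_2^c\cap J_3^c\subseteq I^{ec}=I$, the hypothesis in $R$ gives, say, $J_1^c\cap J_2^c\subseteq I$, and extending via (a) together with $(J_i^c)^e=J_i^{ce}=J_i$ gives $J_1\cap J_2\subseteq I^e$. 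For (3), since $I^e$ is proper it cannot meet $S$, so $I\cap S=\emptyset$, and then $I$ being primary gives $I^{ec}=I$; applying (1) to the strongly 2-irreducible ideal $I^e$ of $S^{-1}R$ shows $I=(I^e)^c$ is strongly 2-irreducible in $R$.

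Part (4) I would prove word for word as (1), with $R\to S^{-1}R$ replaced by the faithfully flat map $R\to R'$, using $IR'\cap R=I$ and (a); properness of $I$ follows because $IR'=R'$ would force $I=R$. Part (5) is immediate from the theorem already established for a surjective homomorphism $f:R\to S$ and an ideal containing $\mathrm{Ker}(f)$: take $f$ to be the quotient map $R\to R/H$, whose kernel $H$ lies in $I$, and observe $I^e=I/H$. For (6), I would take a primary decomposition $I=Q_1\cap\cdots\cap Q_n$ of the strongly 2-irreducible ideal $I$ (possible since $R$ is Laskerian) and delete redundant components to make it irredundant; if $n\geq 3$, I apply the definition of strong 2-irreducibility to $J=Q_1$, $K=Q_2$, $L=Q_3\cap\cdots\cap Q_n$, whose triple intersection equals $I$, and each of the three possible outcomes $Q_1\cap Q_2\subseteq I$, $Q_1\cap L\subseteq I$, $Q_2\cap L\subseteq I$ (combined with the reverse inclusions, which hold automatically) exhibits $I$ as an intersection of at most $n-1$ of the $Q_i$, contradicting irredundancy; hence $n\leq 2$.

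I do not expect a serious obstacle. The points that need care are that extension commutes with \emph{finite} intersections of ideals only under flatness --- this is exactly where the localization, faithful-flatness and quotient hypotheses are used --- and that the ``primary'' assumption in (2) and (3) is precisely what is needed to guarantee $I^{ec}=I$. In (6) the only subtlety is the preliminary reduction to an irredundant decomposition, which is automatic since the decomposition is finite.
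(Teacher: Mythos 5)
Your proposal is correct and follows essentially the same route as the paper: extension/contraction with $J^{ce}=J$ and (for primary $I$ disjoint from $S$) $I^{ec}=I$ in parts (1)--(3), flat base change of finite intersections plus $JR'\cap R=J$ in part (4), the quotient correspondence in part (5), and applying strong 2-irreducibility to a primary decomposition in part (6). The only cosmetic differences are that the paper proves (5) by a direct computation rather than citing the surjective-homomorphism theorem, and in (6) it extracts $Q_r\cap Q_s\subseteq I$ directly (leaving the induction implicit) where you phrase the same step as a contradiction with irredundancy.
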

\begin{proof}
(1) Assume that $I$ is a strongly 2-irreducible ideal of $S^{-1}R$.
Let $J,~K$ and $L$ be ideals of $R$ such that $J\cap K\cap L\subseteq I^c$. Then
$J^e\cap K^e\cap L^e\subseteq I^{ce}=I$. Hence either $J^e\cap K^e\subseteq I$
or $J^e\cap L^e\subseteq I$ or $K^e\cap L^e\subseteq I$ since $I$ is 
strongly 2-irreducible. Therefore either $J\cap K\subseteq I^c$
or $J\cap L\subseteq I^c$ or $K\cap L\subseteq I^c$. Consequently 
 $I^c$ is a strongly 2-irreducible ideal of $R$.\\
(2) Suppose that $I$ is a  primary strongly 2-irreducible ideal such that
$I\cap S=\emptyset$. Let $J,~K$ and $L$ be ideals of $S^{-1}R$ such that $J\cap K\cap L\subseteq I^e$.
Since $I$ is a primary ideal, then $J^c\cap K^c\cap L^c\subseteq I^{ec}=I$.
Thus $J^c\cap K^c\subseteq I$ or $J^c\cap L^c\subseteq I$
or $K^c\cap L^c\subseteq I$. Hence $J\cap K\subseteq I^e$ or $J\cap L\subseteq I^e$
or $K\cap L\subseteq I^e$.\\
(3) Let $I$ be a primary ideal of $R$, and let $I^{e}$ be a strongly 2-irreducible ideal of $S^{-1}R$. By part (1), $I^{ec}$ is strongly 2-irreducible. Since $I$ is primary, we have $I^{ec}=I$,
and thus we are done.\\
(4)  Let $J,~K$ and $L$ be ideals of $R$ such that $J\cap K\cap L\subseteq I$. Thus
$JR^\prime\cap KR^\prime\cap LR^\prime=(J\cap K\cap L)R^\prime\subseteq IR^\prime$, by {\rm\cite[Lemma 9.9]{f}}. Since $IR^\prime$ is strongly 2-irreducible, then either $JR^\prime\cap KR^\prime\subseteq IR^\prime$
or $JR^\prime\cap LR^\prime\subseteq IR^\prime$
or $KR^\prime\cap LR^\prime\subseteq IR^\prime$. Without loss of generality, assume that 
$JR^\prime\cap KR^\prime\subseteq IR^\prime$. So,
$(JR^{\prime}\cap R)\cap(KR^\prime\cap R)\subseteq IR^\prime\cap R$. Hence
$J\cap K\subseteq I$, by {\rm\cite[Theorem 4.74]{lam}}. Consequently
$I$ is strongly 2-irreducible.\\
(5) Let $J,~K$ and $L$ be ideals of $R$ containing $H$ such that $(J/H)\cap(K/H)\cap(L/H)\subseteq I/H$.
Hence $J\cap K\cap L\subseteq I$. Therefore, either $J\cap K\subseteq I$ or $J\cap L\subseteq I$ 
or $K\cap L\subseteq I$. Thus, $(J/H)\cap(K/H)\subseteq I/H$ or $(J/H)\cap(L/H)\subseteq I/H$
or $(K/H)\cap(L/H)\subseteq I/H$. Consequently, $I/H$ is strongly 2-irreducible.\\
(6) Let $I$ be a strongly 2-irreducible ideal and $\cap_{i=1}^{n} Q_i$ be a primary decomposition
of $I$. Since $\cap_{i=1}^{n} Q_i\subseteq I$, then there are $1\leq r,s\leq n$ such that 
$Q_r\cap Q_s\subseteq I=\cap_{i=1}^{n} Q_i\subseteq Q_r\cap Q_s$.
\end{proof}

Let $S$ be a multiplicatively closed subset of a ring $R$. Set $$C:=\{I^c\mid I ~\mbox{is an ideal of} ~R_S\}.$$
\begin{corollary}\label{basic10}
Let $R$ be a ring and $S$ be a multiplicatively closed subset of $R$. Then there is a one-to-one 
correspondence between the strongly 2-irreducible ideals of $R_S$ and strongly 2-irreducible ideals of $R$
contained in $C$ which do not meet $S$.
\end{corollary}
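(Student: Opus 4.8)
The plan is to exhibit the correspondence explicitly through the contraction and extension maps attached to the localization homomorphism $f:R\to R_S$, and then to check that these two maps are mutually inverse and well defined on the indicated sets. Write $J^{c}$ for the contraction of an ideal $J$ of $R_S$ and $I^{e}$ for the extension of an ideal $I$ of $R$. I would first record the facts the argument rests on: $(i)$ $J^{ce}=J$ for every ideal $J$ of $R_S$; $(ii)$ extension commutes with finite intersections, i.e. $(A\cap B)^{e}=A^{e}\cap B^{e}$ for ideals $A,B$ of $R$, by flatness of $R_S$ over $R$ (contraction of course always commutes with intersections); $(iii)$ $I^{e}=R_S$ if and only if $I\cap S\neq\emptyset$; and $(iv)$ an ideal $I$ of $R$ belongs to $C$ precisely when $I=I^{ec}$ (if $I=K^{c}$ then $I^{ec}=(K^{c})^{ec}=(K^{ce})^{c}=K^{c}=I$ by $(i)$, and the converse is immediate).

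Next I would verify that the maps
$$\Phi:J\longmapsto J^{c},\qquad \Psi:I\longmapsto I^{e}$$
are well defined. For $\Phi$: if $J$ is a strongly 2-irreducible (in particular proper) ideal of $R_S$, then $J^{c}$ is a strongly 2-irreducible ideal of $R$ by Theorem \ref{basic4}(1); it lies in $C$ by the very definition of $C$; and $J^{c}\cap S=\emptyset$, since an element of $J^{c}\cap S$ would be a unit of $R_S$ lying in $J$, forcing $J=R_S$. For $\Psi$: let $I$ be a strongly 2-irreducible ideal of $R$ with $I\in C$ and $I\cap S=\emptyset$; by $(iii)$, $I^{e}$ is proper. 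Given ideals $J_1,J_2,J_3$ of $R_S$ with $J_1\cap J_2\cap J_3\subseteq I^{e}$, contracting and using $I=I^{ec}$ from $(iv)$ gives $J_1^{c}\cap J_2^{c}\cap J_3^{c}\subseteq I$, so by strong 2-irreducibility of $I$ we may assume $J_1^{c}\cap J_2^{c}\subseteq I$; extending and invoking $(ii)$ and $(i)$ then yields $J_1\cap J_2=J_1^{ce}\cap J_2^{ce}=(J_1^{c}\cap J_2^{c})^{e}\subseteq I^{e}$. Hence $I^{e}$ is strongly 2-irreducible in $R_S$.

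Finally I would check that $\Phi$ and $\Psi$ are inverse to one another: $\Phi(\Psi(I))=(I^{e})^{c}=I^{ec}=I$ by $(iv)$ since $I\in C$, and $\Psi(\Phi(J))=(J^{c})^{e}=J^{ce}=J$ by $(i)$; this gives the claimed bijection. The main obstacle, and essentially the only nonroutine point, is the well-definedness of $\Psi$: unlike Theorem \ref{basic4}(2) no primary hypothesis is available, so the equality $I^{ec}=I$ must be drawn from membership in $C$ rather than from primariness, and the closing step of the computation depends on the fact that extension to a localization commutes with finite intersections — precisely what makes the contracted-then-extended inclusion collapse back to $I^{e}$.
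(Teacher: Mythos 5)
Your proposal is correct and follows essentially the same route as the paper: one direction is Theorem \ref{basic4}(1) together with the observations that $J^{c}\in C$ and $J^{c}\cap S=\emptyset$, and the other direction uses $I\in C\Rightarrow I^{ec}=I$ to contract the inclusion $J_1\cap J_2\cap J_3\subseteq I^{e}$, apply strong 2-irreducibility of $I$, and re-extend via $(J_1\cap J_2)^{ce}=J_1\cap J_2$. The only cosmetic difference is that you spell out the mutual inverseness of the two maps (and invoke flatness for $(A\cap B)^{e}=A^{e}\cap B^{e}$, which is not actually needed since $(J_1^{c}\cap J_2^{c})^{e}=(J_1\cap J_2)^{ce}=J_1\cap J_2$ already follows from $J^{ce}=J$), while the paper leaves the bijectivity check implicit.
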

\begin{proof}
If $I$ is a strongly 2-irreducible ideal of $R_S$, then evidently $I^c\neq R$, $I^c\in C$ and by
Theorem \ref{basic4}(1), $I^c$ is a strongly 2-irreducible ideal of $R$.
Conversely, let $I$ be a strongly 2-irreducible ideal of $R$, $I\cap S=\emptyset$ and $I\in C$. Since
$I\cap S=\emptyset$, $I^e\neq R_S$. Let $J\cap K\cap L\subseteq I^e$ where $J,~K$ and $L$
are ideals of $R_S$. Then $J^c\cap K^c\cap L^c=(J\cap K\cap L)^c\subseteq I^{ec}$. Now since $I\in C$,
then $I^{ec}=I$. So $J^c\cap K^c\cap L^c\subseteq I$. Hence, either $J^c\cap K^c\subseteq I$
or $J^c\cap L^c\subseteq I$ or $K^c\cap L^c\subseteq I$. Then, either $J\cap K=(J\cap K)^{ce}\subseteq I^e$
or $J\cap L=(J\cap L)^{ce}\subseteq I^e$ or $K\cap L=(K\cap L)^{ce}\subseteq I^e$. Consequently, 
$I^e$ is a strongly 2-irreducible ideal of $R_S$.
\end{proof}

Let $n$ be a natural number. We say that $I$ is an $n$-{\it primary ideal} of a ring $R$ if $I$ is
 the intersection of $n$ primary ideals of $R$.
\begin{proposition}
Let $R$ be a ring. Then the following conditions are equivalent:
\begin{enumerate}
\item Every n-primary ideal of $R$ is a strongly 2-irreducible ideal;
\item For any prime ideal $P$ of $R$, every n-primary ideal of $R_P$ is a strongly 2-irreducible
ideal;
\item For any maximal ideal $m$ of $R$, every n-primary ideal of $R_m$ is a strongly 2-irreducible
ideal.
\end{enumerate}
\end{proposition}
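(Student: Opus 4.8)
The plan is to run a localization argument establishing the cycle $(1)\Rightarrow(2)\Rightarrow(3)\Rightarrow(1)$; the implication $(2)\Rightarrow(3)$ needs no proof, since every maximal ideal is prime. Throughout I would use two standard facts about an arbitrary ring homomorphism: contraction carries primary ideals to primary ideals, and contraction commutes with finite intersections of ideals; together these say that the contraction of an $n$-primary ideal is again $n$-primary. I would also use that $(\mathfrak a^{c})^{e}=\mathfrak a$ for every ideal $\mathfrak a$ of a localization.

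For $(1)\Rightarrow(2)$ I would fix a prime $P$ of $R$, set $S=R\setminus P$ so that $S^{-1}R=R_P$, and take an $n$-primary ideal $\mathfrak q=\mathfrak q_1\cap\cdots\cap\mathfrak q_n$ of $R_P$. Its contraction along $R\to R_P$, namely $\mathfrak q^{c}=\mathfrak q_1^{c}\cap\cdots\cap\mathfrak q_n^{c}$, is an $n$-primary ideal of $R$, it is proper because $\mathfrak q^{c}\subseteq P$ (as $\mathfrak q\neq R_P$), and it does not meet $S$. By $(1)$ it is strongly $2$-irreducible in $R$, and since $\mathfrak q^{c}=(\mathfrak q)^{c}$ lies in the set $C$ attached to $S$, Corollary \ref{basic10} gives that $(\mathfrak q^{c})^{e}$ is strongly $2$-irreducible in $R_P$; as $(\mathfrak q^{c})^{e}=\mathfrak q$, this proves $(2)$.

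For $(3)\Rightarrow(1)$ I would take an $n$-primary ideal $I=Q_1\cap\cdots\cap Q_n$ of $R$ with $\sqrt{Q_i}=P_i$ and ideals $J,K,L$ with $J\cap K\cap L\subseteq I$. For each maximal ideal $m$ one has $I_m=\bigcap_{P_i\subseteq m}(Q_i)_m$, an intersection of at most $n$ primary ideals of $R_m$; so either $I_m=R_m$, or $I_m$ is $k$-primary in $R_m$ for some $1\le k\le n$, hence $n$-primary after padding with repeated components, hence strongly $2$-irreducible by $(3)$. Localizing the inclusion at $m$ then forces one of $(J\cap K)_m$, $(J\cap L)_m$, $(K\cap L)_m$ to lie in $I_m$. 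To globalize I would use that for a primary ideal $Q$ with radical $P$ an inclusion $A\subseteq Q$ of ideals is equivalent to $A_P\subseteq Q_P$: if $J\cap K\not\subseteq I$ then $J\cap K\not\subseteq Q_a$ for some $a$, whence $(J\cap K)_m\not\subseteq I_m$ for every maximal $m\supseteq P_a$, and analogously $J\cap L\not\subseteq I$ and $K\cap L\not\subseteq I$ produce components $Q_b$, $Q_c$; a maximal ideal lying over $P_a$, $P_b$ and $P_c$ at once would then contradict the strong $2$-irreducibility of $I_m$ just established.

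The main obstacle is exactly that last step. One must show the three offending primary components cannot be spread over mutually comaximal primes, i.e.\ that $(\bigcap_{a'}P_{a'})+(\bigcap_{b'}P_{b'})+(\bigcap_{c'}P_{c'})\neq R$ for the relevant index sets, and this is the place where the $n$-primary hypothesis on $I$ (as opposed to $I$ being merely some finite intersection of primaries) together with the structure of the witnesses $J,K,L$ has to be exploited in an essential way; everything else is routine localization bookkeeping plus Corollary \ref{basic10}.
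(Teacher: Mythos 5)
Your treatment of $(1)\Rightarrow(2)$ is correct and is essentially the paper's own argument: contract, observe that the contraction is $n$-primary, proper, disjoint from $S$, and lies in $C$, then extend back via Corollary \ref{basic10}. The problem is $(3)\Rightarrow(1)$, where you stop at an acknowledged ``obstacle'': after localizing at each maximal ideal you cannot rule out that the three offending primary components sit over pairwise comaximal primes, so that no single maximal ideal witnesses all three failures simultaneously. This is a genuine gap, not routine bookkeeping, and in fact it cannot be closed: the implication $(3)\Rightarrow(1)$ fails as stated once $n\geq 3$. Take $R=\mathbb{Z}$ and $n=3$. Each $R_m=\mathbb{Z}_{(p)}$ is a discrete valuation ring, whose ideals are totally ordered, so every proper ideal of $R_m$ (in particular every $3$-primary one) is strongly irreducible and hence strongly $2$-irreducible; thus $(3)$ holds. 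But $30\mathbb{Z}=2\mathbb{Z}\cap 3\mathbb{Z}\cap 5\mathbb{Z}$ is a $3$-primary ideal of $\mathbb{Z}$ that is not strongly $2$-irreducible, since none of $6\mathbb{Z}$, $10\mathbb{Z}$, $15\mathbb{Z}$ is contained in $30\mathbb{Z}$ (compare Corollary \ref{basic9}). This is exactly the ``spread over mutually comaximal primes'' configuration you flagged.

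For what it is worth, the paper's own proof of $(3)\Rightarrow(1)$ is much shorter but conceals the same difficulty: it picks a single maximal ideal $m\supseteq I$ and asserts that $(I_m)^c=I$ ``since $I$ is $n$-primary''. That identity holds only when the radicals of all the primary components of $I$ lie in $m$; for $I=30\mathbb{Z}$ and $m=2\mathbb{Z}$ one gets $(I_m)^c=2\mathbb{Z}\neq I$. So your instinct that the comaximal case is where the real work must happen was exactly right, but the conclusion is that the statement itself needs an additional hypothesis (e.g.\ that the primary components admit a common containing maximal ideal) before either your localization argument or the paper's contraction argument can be completed.
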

\begin{proof}
(1)$\Rightarrow$(2) Let $I$ be an $n$-primary ideal of $R_P$. We know that $I^c$
is an $n$-primary ideal of $R$, $I^c\cap(R\backslash P)=\emptyset$, $I^c\in C$ and, by the assumption, $I^c$
is a strongly 2-irreducible ideal of $R$. Now, by Corollary \ref{basic10}, $I=(I^c)_P$ is a strongly 2-irreducible ideal of $R_P$.\\
(2)$\Rightarrow$(3) is clear.\\
(3)$\Rightarrow$(1) Let $I$ be an $n$-primary ideal of $R$ and let $m$ be a maximal ideal of $R$
containing $I$ . Then, $I_m$ is an $n$-primary ideal of $R_m$ and so, by our assumption, $I_m$ is a
strongly 2-irreducible ideal of $R_m$. Now by Theorem \ref{basic3}(1), $(I_m)^c$ is a strongly 2-irreducible
ideal of $R$, and since $I$ is an $n$-primary ideal of $R$, $(I_m)^c=I$, that is, $I$ is a strongly 2-irreducible ideal of $R$.
\end{proof}

\begin{theorem}\label{prod}
Let $R=R_1\times R_2$, where $R_1$ and $R_2$ are rings with
$1\neq0$. Let $J$ be a proper ideal of $R$. Then the following conditions are equivalent:
\begin{enumerate}
\item $J$ is a strongly 2-irreducible ideal of $R$;
\item Either $J=I_1\times R_2$ for some strongly 2-irreducible ideal $I_1$ of $R_1$ or $J=R_1\times I_2$
for some strongly 2-irreducible ideal $I_2$ of $R_2$ or $J=I_1\times I_2$ for
some strongly irreducible ideal $I_1$ of $R_1$ and some strongly irreducible ideal $I_2$ of $R_2$.
\end{enumerate}
\end{theorem}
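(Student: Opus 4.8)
The plan is to reduce everything to the coordinatewise structure of $R=R_1\times R_2$: every ideal of $R$ has the form $I_1\times I_2$ with $I_j$ an ideal of $R_j$, intersections are computed coordinatewise, namely $(A_1\times A_2)\cap(B_1\times B_2)=(A_1\cap B_1)\times(A_2\cap B_2)$, the ideal $I_1\times I_2$ is proper exactly when $I_1\ne R_1$ or $I_2\ne R_2$, and $I_1\times I_2\subseteq I_1'\times I_2'$ iff $I_1\subseteq I_1'$ and $I_2\subseteq I_2'$. With these facts the verification in both directions becomes largely bookkeeping.

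For $(2)\Rightarrow(1)$ I would treat the three listed cases. If $J=I_1\times R_2$ with $I_1$ strongly $2$-irreducible (the case $J=R_1\times I_2$ being symmetric): given ideals $A=A_1\times A_2$, $B=B_1\times B_2$, $C=C_1\times C_2$ with $A\cap B\cap C\subseteq J$, the first coordinate yields $A_1\cap B_1\cap C_1\subseteq I_1$, so (say) $A_1\cap B_1\subseteq I_1$, whence $A\cap B\subseteq I_1\times R_2=J$. If $J=I_1\times I_2$ with $I_1,I_2$ strongly irreducible: first observe that strong irreducibility forces one of any three ideals whose intersection lies in $I_j$ to itself lie in $I_j$ (group two together and apply the definition twice). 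Hence from $A\cap B\cap C\subseteq J$ we obtain some $P\in\{A,B,C\}$ with $P_1\subseteq I_1$ and some $Q\in\{A,B,C\}$ with $Q_2\subseteq I_2$; if $P=Q$ then $P\subseteq J$ and we are done, and otherwise $P\cap Q=(P_1\cap Q_1)\times(P_2\cap Q_2)\subseteq I_1\times I_2=J$. In every case a pairwise intersection lands in $J$.

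For $(1)\Rightarrow(2)$, write $J=I_1\times I_2$, proper, so at least one factor is proper. If $I_2=R_2$ then $I_1\ne R_1$, and $I_1$ is strongly $2$-irreducible: given ideals $X,Y,Z$ of $R_1$ with $X\cap Y\cap Z\subseteq I_1$, apply strong $2$-irreducibility of $J$ to $X\times R_2$, $Y\times R_2$, $Z\times R_2$, whose triple intersection is $(X\cap Y\cap Z)\times R_2\subseteq I_1\times R_2=J$; reading the resulting pairwise containment in the first coordinate gives (say) $X\cap Y\subseteq I_1$. The case $I_1=R_1$ is symmetric. Otherwise $I_1\ne R_1$ and $I_2\ne R_2$, and each $I_j$ must be strongly irreducible: if $I_1$ were not, choose ideals $X,Y$ of $R_1$ with $X\cap Y\subseteq I_1$ but $X\not\subseteq I_1$ and $Y\not\subseteq I_1$, and test $J$ against $A=X\times R_2$, $B=Y\times R_2$, $C=R_1\times I_2$; then $A\cap B\cap C=(X\cap Y)\times I_2\subseteq J$, while $A\cap B=(X\cap Y)\times R_2\not\subseteq J$ (as $R_2\not\subseteq I_2$), $A\cap C=X\times I_2\not\subseteq J$ (as $X\not\subseteq I_1$), and $B\cap C=Y\times I_2\not\subseteq J$ (as $Y\not\subseteq I_1$), contradicting strong $2$-irreducibility; swapping the two factors handles $I_2$.

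The only points that need genuine care are, first, in the third case of $(1)\Rightarrow(2)$, checking that a proper factor is forced to be strongly irreducible and not merely strongly $2$-irreducible — this is exactly the job of the test triple $X\times R_2,\ Y\times R_2,\ R_1\times I_2$, which mixes the two factors so that the obvious "easy" containment $R_2\subseteq I_2$ is unavailable — and, conversely, in $(2)\Rightarrow(1)$, matching up \emph{which} of the three ideals is contained in $I_1$ with which is contained in $I_2$, since these need not a priori be the same ideal; the case split $P=Q$ versus $P\ne Q$ resolves this. Everything else is routine coordinate chasing.
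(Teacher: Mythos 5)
Your proof is correct and follows essentially the same route as the paper: write $J=I_1\times I_2$, split on which factors are proper, and use a mixed test triple (your $X\times R_2,\ Y\times R_2,\ R_1\times I_2$ versus the paper's $R_1x\times R_2,\ R_1\times\{0\},\ R_1y\times R_2$) to force strong irreducibility of each proper factor when both are proper. The only difference is that you verify inline what the paper delegates to earlier results — the quotient-ring argument for the case $I_2=R_2$ (its Theorem 2.15(5)) and the fact that an intersection of two strongly irreducible ideals is strongly 2-irreducible (its Proposition 2.11) — which makes your write-up more self-contained but not materially different.
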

\begin{proof}
(1)$\Rightarrow$(2) Assume that $J$ is a strongly 2-irreducible ideal of $R$. Then
$J=I_1\times I_2$ for some ideal $I_1$ of $R_1$ and some ideal $I_2$ of $R_2$. Suppose that $I_2=R_2$.
Since $J$ is a proper ideal of $R$, $I_1\neq R_1$. Let $R'=\frac{R}{\{0\}\times R_2}$. Then $J'=\frac{J}{\{0\}\times R_2}$
is a strongly 2-irreducible ideal of $R'$ by Theorem \ref{basic4}(5). Since $R'$ is ring-isomorphic to $R_1$ and $I_1\simeq J'$,
$I_1$ is a strongly 2-irreducible ideal of $R_1$. Suppose that $I_1=R_1$. Since $J$
is a proper ideal of $R$, $I_2\neq R_2$. By a similar argument as in the previous case, $I_2$ is
a strongly 2-irreducible ideal of $R_2$. Hence assume that $I_1\neq R_1$ and $I_2\neq R_2$. 
Suppose that $I_1$ is not a strongly irreducible ideal of $R_1$. Then there are $x,~y\in R_1$
such that $R_1x\cap R_1y\subseteq I_1$ but neither $x\in I_1$ nor $y\in I_1$. Notice that 
$(R_1x\times R_2)\cap(R_1\times\{0\})\cap(R_1y\times R_2)=(R_1x\cap R_1y)\times\{0\}\subseteq J$,
but neither $(R_1x\times R_2)\cap(R_1\times\{0\})=R_1x\times\{0\}\subseteq J$
nor $(R_1x\times R_2)\cap(R_1y\times R_2)=(R_1x\cap R_1y)\times R_2\subseteq J$
nor $(R_1\times\{0\})\cap(R_1y\times R_2)=R_1y\times\{0\}\subseteq J$,
which is a contradiction. Thus $I_1$ is a strongly irreducible ideal of $R_1$.
Suppose that $I_2$ is not a strongly irreducible ideal of $R_2$. Then there are $z,~w\in R_2$
such that $R_2z\cap R_2w\subseteq I_2$ but neither $z\in I_2$ nor $w\in I_2$. Notice that 
$(R_1\times R_2z)\cap(\{0\}\times R_2)\cap(R_1\times R_2w)=\{0\}\times(R_2z\cap R_2w)\subseteq J$,
but neither $(R_1\times R_2z)\cap(\{0\}\times R_2)=\{0\}\times R_2z\subseteq J$,
nor $(R_1\times R_2z)\cap(R_1\times R_2w)=R_1\times(R_2z\cap R_2w)\subseteq J$
nor $(\{0\}\times R_2)\cap(R_1\times R_2w)=\{0\}\times R_2w\subseteq J$,
which is a contradiction. Thus $I_2$ is a strongly irreducible ideal of $R_2$.\\
(2)$\Rightarrow$(1) If $J=I_1\times R_2$ for some strongly 2-irreducible ideal $I_1$ of $R_1$ or
$J=R_1\times I_2$ for some strongly 2-irreducible ideal $I_2$ of $R_2$, then it is clear that $J$ is
a strongly 2-irreducible ideal of $R$. Hence assume that $J=I_1\times I_2$ for some strongly irreducible
ideal $I_1$ of $R_1$ and some strongly irreducible ideal $I_2$ of $R_2$. Then 
$I'_1=I_1\times R_2$ and $I'_2=R_1\times I_2$ are strongly irreducible ideals of $R$. Hence 
$I'_1\cap I'_2=I_1\times I_2=J$ is a strongly 2-irreducible ideal of $R$ by Proposition \ref{basic}.
\end{proof}

\begin{theorem}
Let $R=R_1\times R_2\times\cdots\times R_n$, where $2\leq n<\infty$, and $R_1,R_2,
...,R_n$ are rings with $1\neq0$. Let $J$ be a proper ideal of $R$. Then the
following conditions are equivalent:
\begin{enumerate}
\item $J$ is a strongly 2-irreducible ideal of $R$.
\item Either $J=\times^n_{t=1}I_t$ such that for some $k\in\{1,2,...,n\}$, $I_k$ is a strongly 2-irreducible
ideal of $R_k$, and $I_t=R_t$ for every $t\in\{1,2,...,n\}\backslash\{k\}$ or $J=\times_{t=1}^nI_t$
such that for some $k,m\in\{1,2,...,n\}$, $I_k$ is a strongly irreducible ideal of $R_k$,
$I_m$ is a strongly irreducible ideal of $R_m$, and $I_t=R_t$ for every $t\in\{1,2,...,n\}\backslash\{k,m\}$.
\end{enumerate}
\end{theorem}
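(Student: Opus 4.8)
The plan is to induct on $n$, the base case $n=2$ being precisely Theorem~\ref{prod}. Throughout I would use the standard fact that every ideal of a finite direct product of rings is the direct product of ideals of the factors, so that the decomposition $J=\times_{t=1}^{n}I_{t}$ is automatic and intersections of such ideals are computed coordinatewise. For the inductive step I would split $R=R_{1}\times R'$ with $R'=R_{2}\times\cdots\times R_{n}$ and feed this splitting into Theorem~\ref{prod}, applying the induction hypothesis to $R'$.

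First I would record two auxiliary facts. (a) A proper ideal $\times_{t=1}^{n}I_{t}$ of $R$ is strongly irreducible if and only if there is exactly one index $k$ with $I_{k}$ a proper strongly irreducible ideal of $R_{k}$ and $I_{t}=R_{t}$ for all $t\neq k$. This is proved by induction on $n$ (the substantive point already being present in the proof of Theorem~\ref{prod}): two proper coordinates $I_{k},I_{m}$ with $k\neq m$ cannot coexist, since the ``single non-full slot'' ideals $A$ (equal to $I_{k}$ in slot $k$, full elsewhere) and $B$ (equal to $I_{m}$ in slot $m$, full elsewhere) give $A\cap B=\times_{t}I_{t}$ while neither $A$ nor $B$ is contained in $\times_{t}I_{t}$; the remaining verifications are coordinatewise. (b) If exactly one coordinate $I_{k}$ of a proper ideal $\times_{t=1}^{n}I_{t}$ is a proper strongly $2$-irreducible ideal of $R_{k}$ and all other coordinates are full, then $\times_{t=1}^{n}I_{t}$ is strongly $2$-irreducible in $R$; indeed, writing $R\cong R_{k}\times R^{(k)}$ with $R^{(k)}$ the product of the remaining factors, this ideal corresponds to $I_{k}\times R^{(k)}$, to which Theorem~\ref{prod} applies directly.

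For (1)$\Rightarrow$(2), assuming $J$ strongly $2$-irreducible, Theorem~\ref{prod} applied to $R=R_{1}\times R'$ forces $J$ into one of three shapes. If $J=I_{1}\times R'$ with $I_{1}$ strongly $2$-irreducible in $R_{1}$, this is the first alternative with distinguished index $1$. If $J=R_{1}\times I'$ with $I'$ strongly $2$-irreducible in $R'$, the induction hypothesis puts $I'$, and hence $J$, into one of the two listed forms, all distinguished indices now lying in $\{2,\dots,n\}$. If $J=I_{1}\times I'$ with $I_{1}$ strongly irreducible in $R_{1}$ and $I'$ strongly irreducible in $R'$, then by (a) applied to $R'$ we may write $I'$ as a single non-full slot ideal with strongly irreducible entry in some slot $m\geq 2$, so $J$ is of the second form with indices $1$ and $m$. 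For (2)$\Rightarrow$(1): in the first form $J$ is strongly $2$-irreducible by (b); in the second form, with indices $k,m$, the ideals $A,B$ of fact (a) carrying $I_{k}$ and $I_{m}$ are strongly irreducible ideals of $R$ with $A\cap B=J$, so $J$ is strongly $2$-irreducible by Proposition~\ref{basic}.

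The only genuine work is the bookkeeping in the inductive step — tracking which coordinate carries the distinguished entry after the splitting $R=R_{1}\times R'$, and separating the sub-case where a distinguished index equals $1$ from the sub-case where it lies in $\{2,\dots,n\}$ — together with the auxiliary lemma (a) on strongly irreducible ideals of a finite product, which is not stated in the excerpt but is proved by exactly the argument pattern of Theorem~\ref{prod}. Everything else reduces to coordinatewise identities and to direct appeals to Theorem~\ref{prod} and Proposition~\ref{basic}.
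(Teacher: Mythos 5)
Your proof is correct and follows essentially the same route as the paper: induction on $n$ with Theorem~\ref{prod} as both the base case and the engine of the inductive step (the paper splits off the last factor, writing $R=K\times R_n$ with $K=R_1\times\cdots\times R_{n-1}$, rather than the first, and likewise relies on your auxiliary fact (a), which it merely states as an observation). One small repair in your sketch of (a): when $J$ has three or more proper slots your witnesses give $A\cap B\supsetneq J$ rather than $A\cap B=J$, so they do not contradict strong irreducibility; take instead $A$ equal to $I_t$ in every slot except $R_k$ in slot $k$, and $B$ equal to $I_t$ in every slot except $R_m$ in slot $m$, so that $A\cap B=J$ while neither $A$ nor $B$ is contained in $J$.
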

\begin{proof}
We use induction on $n$. Assume that $n=2$. Then the result is valid by
Theorem \ref{prod}. Thus let $3\leq n <\infty$ and assume that the result is valid when
$K=R_1\times\cdots\times R_{n-1}$. We prove the result when $R=K\times R_n$. By Theorem \ref{prod},
$J$ is a strongly 2-irreducible ideal of $R$ if and only if either $J=L\times R_n$ for some
strongly 2-irreducible ideal $L$ of $K$ or $J=K\times L_n$ for some strongly 2-irreducible
ideal $L_n$ of $R_n$ or $J=L\times L_n$ for some strongly irreducible ideal $L$ of $K$ and some strongly irreducible
ideal $L_n$ of $R_n$. Observe that a proper ideal $Q$ of $K$ is a strongly irreducible ideal of $K$ if
and only if $Q=\times_{t=1}^{n-1}I_t$ such that for some $k\in\{1,2,...,n-1\}$, $I_k$ is a strongly irreducible
ideal of $R_k$, and $I_t=R_t$ for every $t\in\{1,2,...,n-1\}\backslash\{k\}$. Thus the claim is now
verified. 
\end{proof}

\begin{lemma}\label{elem}
Let $R$ be a $GCD$-domain and $I$ be a proper ideal of $R$. The following conditions are equivalent:
\begin{enumerate}
\item $I$ is a singly strongly 2-irreducible ideal;
\item For every elements $x,y,z\in R$, $[x,y,z]\in I$ implies that $[x,y]\in I$ or $[x,z]\in I$ or $[y,z]\in I$.
\end{enumerate}
\end{lemma}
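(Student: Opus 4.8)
The plan is to observe that in a $GCD$-domain every condition appearing in the two statements can be rephrased purely in terms of membership in $I$, after which the equivalence becomes essentially a tautology. The only inputs needed are the identity $Rx\cap Ry=R[x,y]$ recorded before the lemma (with the convention $[x,y]=0$ whenever $x=0$ or $y=0$, consistent with $R0\cap Rw=\{0\}=R0$), the associativity $[[x,y],z]=[x,[y,z]]=[x,y,z]$ also recorded there, and the elementary fact that a principal ideal $Ra$ satisfies $Ra\subseteq I$ if and only if $a\in I$.

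First I would record the translation dictionary. For all $x,y,z\in R$,
\[
Rx\cap Ry\cap Rz=(Rx\cap Ry)\cap Rz=R[x,y]\cap Rz=R[[x,y],z]=R[x,y,z],
\]
so $Rx\cap Ry\cap Rz\subseteq I$ holds exactly when $[x,y,z]\in I$; likewise $Rx\cap Ry\subseteq I$ holds exactly when $[x,y]\in I$, and similarly for the pairs $\{x,z\}$ and $\{y,z\}$.

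With this in hand both implications are immediate. For $(1)\Rightarrow(2)$: if $[x,y,z]\in I$ then $Rx\cap Ry\cap Rz\subseteq I$, so singly strong $2$-irreducibility gives $Rx\cap Ry\subseteq I$ or $Rx\cap Rz\subseteq I$ or $Ry\cap Rz\subseteq I$, which translates to $[x,y]\in I$ or $[x,z]\in I$ or $[y,z]\in I$. For $(2)\Rightarrow(1)$: given $x,y,z\in R$ with $Rx\cap Ry\cap Rz\subseteq I$, the dictionary gives $[x,y,z]\in I$; applying $(2)$ yields one of $[x,y]\in I$, $[x,z]\in I$, $[y,z]\in I$, which translates back to one of $Rx\cap Ry\subseteq I$, $Rx\cap Rz\subseteq I$, $Ry\cap Rz\subseteq I$ — precisely the defining condition for $I$ to be singly strongly $2$-irreducible.

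I do not anticipate a genuine obstacle here; the argument is a routine translation. The only point deserving a word of care is the degenerate case in which one of $x,y,z$ vanishes, where the $GCD$/$LCM$ is not defined in the usual sense; this is handled cleanly by the convention $[0,w]=0$, under which the identity $Rx\cap Ry=R[x,y]$ continues to hold, so the dictionary above is valid without exception.
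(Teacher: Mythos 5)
Your proof is correct and matches the paper's approach exactly: the paper's entire proof is the one-line observation that $Rx\cap Ry=R[x,y]$ makes the equivalence a direct translation, which is precisely your "dictionary." Your extra remark handling the degenerate case $x=0$ (or $y=0$, $z=0$) via the convention $[0,w]=0$ is a reasonable point of care that the paper leaves implicit.
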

\begin{proof}
Since for every elements $x,~y$ of $R$ we have $Rx\cap Ry=R[x,y]$, there is nothing to prove.
\end{proof}

Now we study singly strongly 2-irreducible ideals of a $UFD$.
\begin{theorem}\label{basic3}
Let $R$ be a $UFD$, and let $I$ be a proper ideal of $R$. Then the following conditions hold:
\begin{enumerate}
\item $I$ is singly strongly 2-irreducible if and only if for each elements $x,y,z$ of $R$, $[x,y,z]\in I$
implies that either $[x,y]\in I$ or $[x,z]\in I$ or $[y,z]\in I$.
\item $I$ is singly strongly 2-irreducible if and only if $p_1^{n_1}p_2^{n_2}\cdots p_k^{n_k}\in I$, where 
$p_i$'s are distinct prime elements of $R$ and $n_i$'s are natural numbers, implies that $p_r^{n_r}p_s^{n_s}\in I$,
for some $1\leq r,s\leq k$.
\item If $I$ is a nonzero principal ideal, then $I$ is singly strongly 2-irreducible if and only if the 
generator of $I$ is a prime power or the product of two prime powers.
\item Every singly strongly 2-irreducible ideal is a 2-absorbing primary ideal.
\end{enumerate}
\end{theorem}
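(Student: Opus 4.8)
The plan is to obtain (1) at once — since every $UFD$ is a $GCD$-domain, (1) is exactly Lemma~\ref{elem} — to prove (2) as the substantial step, and to deduce (3) and (4) from it. For the forward implication of (2), I would induct on the number $k$ of distinct primes occurring in $a=p_1^{n_1}p_2^{n_2}\cdots p_k^{n_k}\in I$. If $k\le 2$, then $a$ is itself a product of at most two of the $p_i^{n_i}$ and lies in $I$. If $k\ge 3$, then, because the $p_i$ are pairwise non-associate,
\[
a=[\,p_1^{n_1},\; p_2^{n_2},\; p_3^{n_3}\cdots p_k^{n_k}\,],
\]
the least common multiple of three elements of $R$; since $a\in I$, part~(1) gives that one of $[p_1^{n_1},p_2^{n_2}]=p_1^{n_1}p_2^{n_2}$, $p_1^{n_1}p_3^{n_3}\cdots p_k^{n_k}$, and $p_2^{n_2}p_3^{n_3}\cdots p_k^{n_k}$ lies in $I$. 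In the first case we are done; the other two are products of $k-1\ge 2$ distinct prime powers, and the induction hypothesis applies to them.

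For the converse of (2), I would assume the condition in~(2); by part~(1) it then suffices to show that $[x,y,z]\in I$ forces one of $[x,y],[x,z],[y,z]$ into $I$. If one of $x,y,z$ is $0$, the corresponding pairwise least common multiple is $0\in I$, and $[x,y,z]$ cannot be a unit since $I$ is proper; so assume $x,y,z$ are nonzero and $[x,y,z]=\prod_{i=1}^{k}p_i^{m_i}$ is a non-unit, where the $p_i$ are distinct and $m_i=\max\{v_{p_i}(x),v_{p_i}(y),v_{p_i}(z)\}\ge 1$, with $v_p$ denoting the exponent of $p$ in a factorization. Applying the hypothesis to $\prod p_i^{m_i}\in I$ yields a product $P$ of at most two of the $p_i^{m_i}$ with $P\in I$. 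The elementary point is that for each prime $p_i$ the exponent $m_i$ is attained by at least one of $x,y,z$, so at most one of the three pairwise least common multiples has $p_i$-exponent below $m_i$; since $P$ involves at most two primes while there are three pairwise least common multiples, some $\{u,v\}\subseteq\{x,y,z\}$ satisfies $P\mid[u,v]$, whence $[u,v]\in(P)\subseteq I$, and by part~(1) $I$ is singly strongly $2$-irreducible.

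Part (3) comes from specializing (2): write $I=(a)$ with $a$ a non-unit and $a=\prod_{i=1}^{k}p_i^{n_i}$ ($p_i$ distinct, $n_i\ge 1$). For a principal ideal the condition in (2) reads: for every product $b$ of distinct prime powers with $a\mid b$, some product of at most two of the prime-power factors of $b$ is again a multiple of $a$. Since $a\mid b$ forces each of $p_1,\dots,p_k$ to divide $b$ to a high enough power, such a two-factor multiple of $a$ can exist only if $k\le 2$; and if $k\le 2$ it always exists, while if $k\ge 3$ it already fails for $b=a$. Hence $I$ is singly strongly $2$-irreducible iff $k\le 2$, i.e.\ iff $a$ is a prime power or a product of two prime powers. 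For (4), let $a,b,c\in R$ with $abc\in I$: if $abc=0$ then some factor is $0$ and $0\in\{ab,ac,bc\}\subseteq I$; if some factor is a unit, the product of the other two lies in $I$; and $abc$ is not a unit. Otherwise $abc=u\prod_{p\in S}p^{n_p}$ with $u$ a unit, $S\ne\emptyset$ the set of primes dividing $abc$, and $n_p=v_p(abc)$, so $\prod_{p\in S}p^{n_p}\in I$, and (2) gives $q^{n_q}\in I$ for some $q\in S$ or $q^{n_q}q'^{n_{q'}}\in I$ for distinct $q,q'\in S$. In the former case $q\in\sqrt I$ and $q\mid abc$, so $q$ divides $ac$ or $bc$, giving $ac\in\sqrt I$ or $bc\in\sqrt I$. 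In the latter case, if $q\nmid c$ and $q'\nmid c$ then $v_q(ab)=n_q$ and $v_{q'}(ab)=n_{q'}$, so $q^{n_q}q'^{n_{q'}}\mid ab$ and $ab\in I$; while if (say) $q\mid c$, then since $q'\mid abc$ one of $ac,bc$ is divisible by both $q$ and $q'$, so a sufficiently high power of it lies in $(q^{n_q}q'^{n_{q'}})\subseteq I$, placing that product in $\sqrt I$. Thus $I$ is $2$-absorbing primary.

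The step I expect to be the main obstacle is the converse of (2): clearing the $0$/unit degeneracies cleanly and then running the short combinatorial argument that, given $P\in I$ built from at most two primes, $P$ divides one of the three pairwise least common multiples of $x,y,z$. A minor but essential point is the reading of the hypothesis in (2): "$p_r^{n_r}p_s^{n_s}\in I$ for some $1\le r,s\le k$" should be understood with $r=s$ meaning simply $p_r^{n_r}\in I$, i.e.\ the conclusion is that some product of at most two of the listed prime powers lies in $I$; reading the $r=s$ case as $p_r^{2n_r}\in I$ would make the condition too weak for the converse (and for (3) and (4)) to go through.
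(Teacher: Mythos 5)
Your proposal is correct, and while it follows the same overall architecture (reduce everything to the LCM criterion of part (1) via Lemma \ref{elem}, then work with prime factorizations), the two places where real work is needed are handled by genuinely different and cleaner arguments than the paper's. For the forward direction of (2) you make explicit an induction on $k$ that the paper elides (the paper applies the three-element condition of part (1) directly to a $k$-fold LCM); your reduction $a=[\,p_1^{n_1},p_2^{n_2},p_3^{n_3}\cdots p_k^{n_k}\,]$ is exactly the missing bridge. For the converse of (2) the paper writes out the shared-prime decompositions of $x,y,z$ and announces twenty-one cases, verifying two of them; you replace this with the single observation that for each prime the maximal exponent in $[x,y,z]$ is attained by one of $x,y,z$, so each prime power in the witness $P$ can ``spoil'' at most one of the three pairwise LCMs, and a two-prime $P$ therefore divides at least one of them. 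The same economy shows up in (4): the paper again decomposes $x,y,z$ into shared and unshared primes and picks large exponents case by case, whereas your valuation argument ($v_q(ab)=v_q(abc)$ when $q\nmid c$, and $qq'$ dividing one of $ac,bc$ otherwise) covers all cases uniformly and correctly respects the asymmetry $ab\in I$ versus $ac,bc\in\sqrt I$ in the definition of 2-absorbing primary. Your interpretive remark about the case $r=s$ in (2) is also well taken: the condition must be read as ``some product of at most two of the listed prime powers lies in $I$'' (so $r=s$ yields $p_r^{n_r}\in I$, not $p_r^{2n_r}\in I$), since the weaker reading is what the forward direction actually delivers only under the strong interpretation and is what the converse, part (3), and part (4) all require. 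In short: same skeleton, but your counting argument buys a complete proof where the paper offers a representative sample of cases.
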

\begin{proof}
(1) By Lemma \ref{elem}.\\
(2) Suppose that $I$ is singly strongly 2-irreducible and $p_1^{n_1}p_2^{n_2}\cdots p_k^{n_k}\in I$ in which 
$p_i$'s are distinct prime elements of $R$ and $n_i$'s are natural numbers. Then 
$[p_1^{n_1},p_2^{n_2},\dots,p_k^{n_k}]=p_1^{n_1}p_2^{n_2}\cdots p_k^{n_k}\in I$.
Hence by part (1), there are $1\leq r,s\leq k$ such that $[p_r^{n_r},p_s^{n_s}]\in I$,
i.e., $p_r^{n_r}p_s^{n_s}\in I$.\\
For the converse, let $[x,y,z]\in I$ for some $x,y,z\in R\backslash\{0\}$. Assume that 
$x,~y$ and $z$ have prime decompositions as below,
\begin{eqnarray*}
x&=&p_1^{\alpha_1}p_2^{\alpha_2}\cdots p_k^{\alpha_k}q_1^{\beta_1}q_2^{\beta_2}\cdots q_s^{\beta_s},\\
y&=&p_1^{\gamma_1}p_2^{\gamma_2}\cdots p_k^{\gamma_k}r_1^{\delta_1}r_2^{\delta_2}\cdots r_u^{\delta_u},\\
z&=&p_1^{\varepsilon_1}p_2^{\varepsilon_2}\cdots p_{k'}^{\varepsilon_{k'}}
q_1^{\lambda_1}q_2^{\lambda_2}\cdots q_{s'}^{\lambda_{s'}}r_1^{\mu_1}r_2^{\mu_2}\cdots r_{u'}^{\mu_{u'}}
s_1^{\kappa_1}s_2^{\kappa_2}\cdots s_v^{\kappa_v},
\end{eqnarray*}
in which $0\leq k'\leq k$, $0\leq s'\leq s$ and $0\leq u'\leq u$. Therefore, 
$$\hspace{-2.2cm}[x,y,z]=p_1^{\nu_1}p_2^{\nu_2}\cdots p_{k'}^{\nu_{k'}}p_{k'+1}^{\omega_{k'+1}}\cdots p_k^{\omega_k}q_1^{\rho_1}q_2^{\rho_2}\cdots q_{s'}^{\rho_{s'}}$$
$$\hspace{2cm}q_{s'+1}^{\beta_{s'+1}}\cdots q_s^{\beta_s}r_1^{\sigma_1}r_2^{\sigma_2}\cdots r_{u'}^{\sigma_{u'}}r_{u'+1}^{\delta_{u'+1}}\cdots r_u^{\delta_u}
s_1^{\kappa_1}s_2^{\kappa_2}\cdots s_v^{\kappa_v}\in I,$$
where $\nu_i=max\{\alpha_i,\gamma_i,\varepsilon_i\}$ for every $1\leq i\leq k'$; $\omega_j=max\{\alpha_j,\gamma_j\}$ for every $k'< j\leq k$; $\rho_i=max\{\beta_i,\lambda_i\}$ for every $1\leq i\leq s'$;
$\sigma_i=max\{\delta_i,\mu_i\}$ for every $1\leq i\leq u'$. By part (2), we have twenty one cases. For example we investigate the following two cases. The other cases can be verified in a similar way.\\
{\bf Case 1.} For some $1\leq i,j\leq k'$, $p_i^{\nu_i}p_j^{\nu_j}\in I$. If $\nu_i=\alpha_i$ and $\nu_j=\alpha_j$, then clearly $x\in I$
and so $[x,y]\in I$. If $\nu_i=\alpha_i$ and $\nu_j=\gamma_j$, then $p_i^{\alpha_i}p_j^{\gamma_j}\mid[x,y]$
and thus $[x,y]\in I$. If $\nu_i=\alpha_i$ and $\nu_j=\varepsilon_j$, then $p_i^{\alpha_i}p_j^{\varepsilon_j}\mid[x,z]$
and thus $[x,z]\in I$.\\ 
{\bf Case 2.} Let $p_i^{\nu_i}p_j^{\omega_j}\in I$; for some $1\leq i\leq k'$ and $k'+1\leq j\leq k$. For $\nu_i=\alpha_i$, $\omega_j=\alpha_j$ we have $x\in I$ and so $[x,y]\in I$. For $\nu_i=\varepsilon_i$, $\omega_j=\gamma_j$ we have $[y,z]\in I$.\\
Consequently $I$ is singly strongly 2-irreducible, by part (1).\\
(3) Suppose that $I=Ra$ for some nonzero element $a\in R$. Assume that $I$ is singly strongly 2-irreducible. Let $a=p_1^{n_1}p_2^{n_2}\cdots p_k^{n_k}$ be a prime decomposition for $a$ such that $k>2$. By part (2) we have that $p_r^{n_r}p_s^{n_s}\in I$ for some $1\leq r,s\leq k$. Therefore $I=R(p_r^{n_r}p_s^{n_s})$.\\
 Conversely, if $a$ is a prime power, then $I$ is strongly
 irreducible ideal, by {\rm\cite[Theorem 2.2(3)]{Aziz}}. Hence $I$ is singly strongly 2-irreducible. Let $I=R(p^rq^s)$ for some prime elements $p,~q$ of $R$.
Assume that for some distinct prime elements $q_1,q_2,...,q_k$ of $R$ and natural numbers $m_1,m_2,...,m_k$,
$q_1^{m_1}q_2^{m_2}\cdots q_k^{m_k}\in I=R(p^rq^s)$. Then $p^rq^s\mid q_1^{m_1}q_2^{m_2}\cdots q_k^{m_k}$.
Hence there exists $1\leq i\leq k$ such that $p=q_i$ and $r\leq m_i$, also there exists $1\leq j\leq k$ 
such that $q=q_j$ and $s\leq m_j$. Then, since $p^rq^s\in I$, we have $q_i^{m_i}q_j^{m_j}\in I$. Now, by part (2),
$I$ is singly strongly 2-irreducible.\\
(4) Let $I$ be singly strongly 2-irreducible and $xyz\in I$ for some $x,y,z\in R\backslash\{0\}$.
Consider the following prime decompositions,
\begin{eqnarray*}
x&=&p_1^{\alpha_1}p_2^{\alpha_2}\cdots p_k^{\alpha_k}q_1^{\beta_1}q_2^{\beta_2}\cdots q_s^{\beta_s},\\
y&=&p_1^{\gamma_1}p_2^{\gamma_2}\cdots p_k^{\gamma_k}r_1^{\delta_1}r_2^{\delta_2}\cdots r_u^{\delta_u},\\
z&=&p_1^{\varepsilon_1}p_2^{\varepsilon_2}\cdots p_{k'}^{\varepsilon_{k'}}
q_1^{\lambda_1}q_2^{\lambda_2}\cdots q_{s'}^{\lambda_{s'}}r_1^{\mu_1}r_2^{\mu_2}\cdots r_{u'}^{\mu_{u'}}
s_1^{\kappa_1}s_2^{\kappa_2}\cdots s_v^{\kappa_v},
\end{eqnarray*}
in which $0\leq k'\leq k$, $0\leq s'\leq s$ and $0\leq u'\leq u$. By these representations we have,

$xyz=p_1^{\alpha_1+\gamma_1+\varepsilon_1}p_2^{\alpha_2+\gamma_2+\varepsilon_2}\cdots p_{k'}^{\alpha_{k'}+\gamma_{k'}+\varepsilon_{k'}}p_{k'+1}^{\alpha_{k'+1}+\gamma_{k'+1}}
$\vspace*{2mm}\\
\hspace*{2cm}$\cdots p_{k}^{\alpha_{k}+\gamma_{k}}q_1^{\beta_1+\lambda_1}q_2^{\beta_2+\lambda_2}\cdots q_{s'}^{\beta_{s'}+\lambda_{s'}}q_{s'+1}^{\beta_{s'+1}}\cdots q_s^{\beta_s}$
\vspace*{2mm}\\
\hspace*{3cm}$r_1^{\delta_1+\mu_1}r_2^{\delta_2+\mu_2}\cdots r_{u'}^{\delta_{u'}+\mu_{u'}}
r_{u'+1}^{\delta_{u'+1}}\cdots r_{u}^{\delta_{u}}s_1^{\kappa_1}s_2^{\kappa_2}\cdots s_v^{\kappa_v}\in I.$\\
Now, apply part (2). We investigate some cases that can be happened, the other cases similarly lead us to the claim that 
$I$ is 2-absorbing primary. 
First, assume for some $1\leq i,j\leq k'$, $p_i^{\alpha_i+\gamma_i+\varepsilon_i}
p_j^{\alpha_j+\gamma_j+\varepsilon_j}\in I$. Choose a natural number $n$ such that $n\geq max\{\frac{\alpha_i+\gamma_i}{\varepsilon_i},\frac{\alpha_j+\gamma_j}{\varepsilon_j}\}$. With this choice we have 
$(n+1)\varepsilon_i\geq\alpha_i+\gamma_i+\varepsilon_i$ and $(n+1)\varepsilon_j\geq\alpha_j+\gamma_j+\varepsilon_j$,
so $p_i^{(n+1)\varepsilon_i}p_j^{(n+1)\varepsilon_j}\in I$. Then $z^{n+1}\in I$, so $z\in\sqrt{I}$.
The other one case; assume that for some $1\leq i\leq k'$ and $k'+1\leq j\leq k$, $p_i^{\alpha_i+\gamma_i+\varepsilon_i}
p_j^{\alpha_j+\gamma_j}\in I$. Choose a natural number $n$ such that $n\geq max\{\frac{\alpha_i+\varepsilon_i}{\gamma_i},\frac{\alpha_j}{\gamma_j}\}$. With this choice we have $(n+1)\gamma_i\geq\alpha_i+\gamma_i+\varepsilon_i$ and
$(n+1)\gamma_j\geq\alpha_j+\gamma_j$,
thus $p_i^{(n+1)\gamma_i}p_j^{(n+1)\gamma_j}\in I$. Then $y^{n+1}\in I$, so $y\in\sqrt{I}$.
Assume that $p_i^{\alpha_i+\gamma_i}s_j^{\kappa_j}\in I$, for some $k'+1\leq i\leq k$ and some $1\leq j\leq v$.
Let $n$ be a natural number where $n\geq\frac{\gamma_i}{\alpha_i}$, then $(n+1)\alpha_i\geq\alpha_i+\gamma_i$.
Hence $p_i^{(n+1)\alpha_i}s_j^{(n+1)\kappa_j}\in I$ which shows that $xz\in\sqrt{I}$. Suppose that for some $s'+1\leq i\leq s$ and $u'+1\leq j\leq u$, $q_i^{\beta_i}r_j^{\delta_j}\in I$.
Then, clearly $xy\in I$.
\end{proof}

\begin{corollary}\label{basic12}
Let $R$ be a $UFD$.
\begin{enumerate}
\item Every principal ideal of $R$ is a singly strongly 2-irreducible ideal  if and only if
it is a 2-absorbing primary ideal.
\item Every singly strongly 2-irreducible ideal of $R$ can be generated by a set of 
elements of the forms $p^n$ and $p_i^{n_i}p_j^{n_j}$ in which $p,p_i,p_j$ are some prime elements of $R$
and $n,n_i,n_j$ are some natural numbers.
\item Every 2-absorbing ideal of $R$ is a singly strongly 2-irreducible ideal.
\end{enumerate}
\end{corollary}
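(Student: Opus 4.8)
The plan is to read off all three assertions from Theorem~\ref{basic3}, which already carries the structural content; the only genuinely new work is a short classification step in part~(1) and a routine induction on the number of distinct prime divisors, a pattern that recurs in parts (2) and (3). The one place that needs care is the converse of part~(1): before invoking Theorem~\ref{basic3}(3) one must rule out generators with three or more distinct prime factors, and this requires the 2-absorbing primary hypothesis directly (the same computation as in the proof of Theorem~\ref{basic8}).

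\emph{Part (1).} The forward implication is the special case of Theorem~\ref{basic3}(4) for principal ideals. For the converse, let $I=Ra$ be a principal 2-absorbing primary ideal. If $a=0$ then $I=\{0\}$, which is prime since $R$ is a domain, hence strongly irreducible and in particular singly strongly 2-irreducible. If $a\neq0$, write $a=p_1^{n_1}\cdots p_k^{n_k}$ with the $p_i$ distinct primes and each $n_i\geq1$; I claim $k\leq2$. If $k\geq3$, put $x=p_1^{n_1}$, $y=p_2^{n_2}$, $z=p_3^{n_3}\cdots p_k^{n_k}$, so $xyz=a\in I$. As $I$ is 2-absorbing primary, one of $xy\in I$, $xz\in\sqrt I$, $yz\in\sqrt I$ holds, where $\sqrt I=R(p_1\cdots p_k)$; but $p_3\nmid xy$ gives $a\nmid xy$, while $p_2\nmid xz$ and $p_1\nmid yz$ prevent $p_1\cdots p_k$ from dividing $xz$ or $yz$ — a contradiction. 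Hence $k\leq2$, and Theorem~\ref{basic3}(3) shows $I$ is singly strongly 2-irreducible. (This classification of principal 2-absorbing primary ideals of a $UFD$ can alternatively be quoted from \cite{Bt}.)

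\emph{Part (2).} Let $I$ be a proper singly strongly 2-irreducible ideal and let $S$ be the set of those elements of $I$ that are a prime power $p^{n}$ or a product $p_i^{n_i}p_j^{n_j}$ of two prime powers. Since $S\subseteq I$ we have $(S)\subseteq I$, so it suffices to prove that every $a\in I\setminus\{0\}$ is divisible by some element of $S$, as then $a\in(S)$. Argue by induction on the number $k$ of distinct prime divisors of $a$; note $k\geq1$ because $I$ is proper. Write $a=u\,p_1^{n_1}\cdots p_k^{n_k}$ with $u$ a unit. If $k\leq2$ then $u^{-1}a=p_1^{n_1}\cdots p_k^{n_k}\in S$ and divides $a$. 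If $k\geq3$, note that $a$ is, up to the unit $u$, the least common multiple of $p_1^{n_1}$, $p_2^{n_2}$ and $w:=p_3^{n_3}\cdots p_k^{n_k}$, so $Rp_1^{n_1}\cap Rp_2^{n_2}\cap Rw=Ra\subseteq I$; by Theorem~\ref{basic3}(1), $p_1^{n_1}p_2^{n_2}\in I$, or $p_1^{n_1}w\in I$, or $p_2^{n_2}w\in I$. In the first case $p_1^{n_1}p_2^{n_2}\in S$ and divides $a$; in the other two cases we have produced an element of $I$ with $k-1$ distinct prime divisors that divides $a$, and the induction hypothesis supplies an element of $S$ dividing it, hence dividing $a$. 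This proves $I=(S)$.

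\emph{Part (3).} Let $I$ be a 2-absorbing ideal. By Theorem~\ref{basic3}(2) it suffices to show that $p_1^{n_1}\cdots p_k^{n_k}\in I$, with the $p_i$ distinct primes and $n_i\geq1$, forces $p_r^{n_r}p_s^{n_s}\in I$ for some $r,s$. Induct on $k$: for $k=1$, $p_1^{2n_1}=p_1^{n_1}\cdot p_1^{n_1}\in I$ because $I$ is an ideal; for $k=2$ there is nothing to prove. For $k\geq3$, factor $p_1^{n_1}\cdots p_k^{n_k}=p_1^{n_1}\cdot p_2^{n_2}\cdot(p_3^{n_3}\cdots p_k^{n_k})\in I$; since $I$ is 2-absorbing, one of $p_1^{n_1}p_2^{n_2}$, $p_1^{n_1}(p_3^{n_3}\cdots p_k^{n_k})$, $p_2^{n_2}(p_3^{n_3}\cdots p_k^{n_k})$ lies in $I$, and each of the last two is a product of $k-1$ distinct prime powers, so the induction hypothesis applies. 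Hence the criterion of Theorem~\ref{basic3}(2) is met and $I$ is singly strongly 2-irreducible. All the remaining steps are elementary divisibility bookkeeping.
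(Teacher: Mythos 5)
Your proposal is correct and follows essentially the same route as the paper: all three parts are reduced to Theorem \ref{basic3}, with the converse of (1) obtained by bounding the number of distinct prime factors of a generator of a principal 2-absorbing primary ideal and then invoking Theorem \ref{basic3}(3). The only differences are cosmetic — you handle the zero ideal explicitly, run a divisibility descent in (2) where the paper applies Theorem \ref{basic3}(2) directly to each generator, and write out the induction on the number of prime factors that the paper leaves implicit in (3) — none of which changes the argument.
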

\begin{proof}
(1) Suppose that $I$ is singly strongly 2-irreducible ideal. By Theorem \ref{basic3}(4), 
$I$ is a 2-absorbing primary ideal. Conversely, let $I$ be a nonzero 2-absorbing primary ideal. 
Let $I=Ra$, where $0\neq a\in I$. Assume that $a=p_1^{n_1}p_{2}^{n_2}\cdots p_{k}^{n_k}$
be a prime decomposition for $a$. If $k>2$, then since $p_1^{n_1}p_{2}^{n_2}\cdots p_{k}^{n_k}\in I$
and $I$ is a 2-absorbing primary ideal, there exist a natural number $n$, and integers $1\leq i,j\leq k$
such that $p_i^{nn_i}p_j^{nn_j}\in I$, say $i=1$ and $j=2$. Therefore $p_3\mid p_1^{nn_1}p_2^{nn_2}$ which is a contradiction. Therefore $k=1$ or 2, that is 
$I=Rp_1^{n_1}$ or $I=R(p_1^{n_1}p_2^{n_2})$, respectively. Hence by Theorem \ref{basic3}(3),
$I$ is singly strongly 2-irreducible.\\
(2) Let $X$ be a generator set for a singly strongly 2-irreducible ideal of $I$, and let $x$
be a nonzero element of $X$. Assume that $x=p_1^{n_1}p_2^{n_2}\cdots p_k^{n_k}$
be a prime decomposition for $x$ such that $k\geq2$. By Theorem \ref{basic3}(2), for some $1\leq i,j\leq k$, 
we have $p_i^{n_i}p_{j}^{n_j}\in I$, and then $Rx\subseteq Rp_i^{n_i}p_{j}^{n_j}\subseteq I$.
Consequently, $I$ can be generated by a set of elements of the forms $p^n$ and  $p_i^{n_i}p_{j}^{n_j}$.\\
(3) is a direct consequence of Theorem \ref{basic3}(2).
\end{proof}

The following example shows that  in part (1) of Corollary \ref{basic12} the condition that $I$ is principal is necessary.
Moreover, the converse of part (2) of this corollary need not be true. 
\begin{example}
Let $F$ be a field and $R=F[x,y,z]$, where $x,~y$ and $z$ are independent indeterminates. 
We know that $R$ is a $UFD$. Suppose that $I=\langle x,y^2,z^2\rangle$. Since $\sqrt{\langle x,y^2,z^2\rangle}=
\langle x,y,z\rangle$ is a maximal ideal of $R$, $I$ is a primary ideal and so is a 2-absorbing primary ideal.
Notice that $(x+y+z)yz\in I$, but neither $(x+y+z)y\in I$ nor $(x+y+z)z\in I$ nor $yz\in I$.
Consequently, $I$ is not singly strongly 2-irreducible, by Theorem \ref{basic3}(2).
\end{example}

%%%
%%% Acknowledgments
%%%
\section*{\bf Acknowledgments}
The authors are grateful to the referee of this paper for his/her careful reading and
comments.
%------------------------------------------------------------------------------------%


\begin{thebibliography}{20}

\bibitem{AA} 
D. D. Anderson and D. F. Anderson, 
\newblock Generalized $GCD$-domains, 
\newblock {\em Comment. Math. Univ. St. Pauli} \textbf{28} (1979) 215--221.

\bibitem{AB1} 
D. F. Anderson and A. Badawi, 
\newblock On $n$-absorbing ideals of commutative rings, 
\newblock {\em Comm. Algebra} \textbf{39} (2011) 1646--1672.

\bibitem{Aziz} 
A. Azizi, 
\newblock Strongly irreducible ideals, 
\newblock {\em J. Aust. Math. Soc.} \textbf{84} (2008) 145--154.

\bibitem{B} 
A. Badawi, 
\newblock On $2$-absorbing ideals of commutative rings, 
\newblock {\em Bull. Austral. Math. Soc.} \textbf{75} (2007) 417--429.

\bibitem{Bt} 
A. Badawi, U. Tekir and E. Yetkin, 
\newblock On $2$-absorbing primary ideals in commutative rings, 
\newblock {\em Bull. Korean Math. Soc.} \textbf{51}
(2014), no. 4, 1163--1173.

\bibitem{YB} 
A. Badawi and A. Yousefian Darani, 
\newblock On weakly $2$-absorbing ideals of commutative rings, 
\newblock{\it Houston J. Math.} \textbf{39} (2013) 441--452.

\bibitem{f} 
L. Fuchs and L. Salce, 
\newblock {\em Modules over non-noetherian domains}, 
\newblock Mathematical Surveys and Monographs, Vol. 84, 2000.

\bibitem{hei} 
W. J. Heinzer, L. J. Ratliff Jr and D. E. Rush, 
\newblock Strongly irreducible ideals of a commutative ring,
\newblock {\em J. Pure Appl. Algebra} \textbf{166} (2002) 267--275.

\bibitem{H} 
T. W. Hungerford, 
\newblock {\em Algebra}, 
\newblock Springer-Verlag, 1974.


\bibitem{jen} 
C. Jensen, 
\newblock Arithmetical rings, 
\newblock {\em Acta Math. Acad. Sci. Hungar.} \textbf{17} (1966) 115--123.

\bibitem{jeo} 
Y. C. Jeon, N. K. Kim and Y. Lee, 
\newblock On fully idempotent rings,
\newblock{\em Bull. Korean Math. Soc.} \textbf{47} (2010), no. 4, 715--726. 

\bibitem{lam} 
T. Y. Lam, 
\newblock {\em Lectures on modules and rings}, 
\newblock Graduate Texts in Mathematics, Springer-Verlag, 1998.

\bibitem{L} 
D. Lorenzini, 
\newblock {\em An invitation to arithmetic geometry,} 
\newblock Graduate Studies in Mathematics, Vol. 9, 1996.

\bibitem{M} 
H. Mostafanasab, E. Yetkin, U. Tekir and A. Yousefian Darani, 
\newblock  On 2-absorbing primary submodules of modules over commutative rings,
\newblock {\em An. Sti. U. Ovid. Co-mat.} To appear.


\bibitem{Sh} 
R.Y. Sharp, 
\newblock {\em Steps in commutative algebra}, 
\newblock Second edition, Cambridge University Press, Cambridge, 2000. 

\bibitem{YFM1} 
A. Yousefian Darani, and H. Mostafanasab,  
\newblock Co-$2$-absorbing preradicals and submodules, 
\newblock {\it J. Algebra Appl.} To appear.

\bibitem{YFM2} 
A. Yousefian Darani, and H. Mostafanasab,  
\newblock On $2$-absorbing preradicals, 
\newblock {\it J. Algebra Appl.} \textbf{14} (2015), no. 2, 1550017 (22 pages.) 

\bibitem{YFP} 
A. Yousefian Darani, and E. R. Puczy{\l}owski,  
\newblock On 2-absorbing commutative semigroups and their applications to rings,
\newblock {\it Semigroup Forum} \textbf{86} (2013) 83--91.

\bibitem{YF} 
A. Yousefian Darani and F. Soheilnia, 
\newblock $2$-absorbing and weakly $2$-absorbing submoduels, 
\newblock {\it Thai J. Math.} \textbf{9} (2011), no. 3, 577--584.

\bibitem{YF2} 
A. Yousefian Darani and F. Soheilnia, 
\newblock On $n$-absorbing submodules, 
\newblock{\em Math. Commun}. \textbf{17} (2012) 547--557.

\end{thebibliography}
\end{document}